\newcommand{\g}{\mathfrak{g}}
\newcommand{\E}{\mscr{E}}
\newcommand{\what}{\widehat}
\newcommand{\til}{\widetilde}
\newcommand{\mscr}{\mathscr}
\newcommand{\br}{\overline}
\newcommand{\iso}{\cong}
\newcommand{\C}{\mathbb C}
\newcommand{\Oo}{\mscr O}
\newcommand{\Z}{\mathbb Z}
\newcommand{\op}{\operatorname}
\newcommand{\mbb}{\mathbb}
\newcommand{\mf}{\mathfrak}
\newcommand{\mc}{\mathcal}
\newcommand{\ip}[1]{\left\langle #1 \right\rangle}
\newcommand{\abs}[1]{\left| #1 \right|}
\newcommand{\R}{\mbb R}
\renewcommand{\d}{\mathrm{d}}
\renewcommand{\epsilon}{\varepsilon}
\newcommand{\dbar}{\br{\partial}}
\DeclareMathOperator{\Diff}{Diff}
\newtheoremstyle{thm}% name
  {7pt}%      Space above
  {7pt}%      Space below
  {\itshape}%         Body font
  {}%         Indent amount (empty = no indent, \parindent = para indent)
  {\bf}% Thm head font
  {.}%        Punctuation after thm head
  {5pt}%     Space after thm head: " " = normal interword space;
\newtheoremstyle{def}% name
  {7pt}%      Space above
  {10pt}%      Space below
  {\itshape}%         Body font
  {}%         Indent amount (empty = no indent, \parindent = para indent)
  {\bf}% Thm head font
  {.}%        Punctuation after thm head
  {5pt}%     Space after thm head: " " = normal interword space;
\newtheoremstyle{rem}% name
  {4pt}%      Space above
  {7pt}%      Space below
  {}%         Body font
  {}%         Indent amount (empty = no indent, \parindent = para indent)
  {\itshape}% Thm head font
  {:}%        Punctuation after thm head
  {3pt}%     Space after thm head: " " = normal interword space;
\newtheoremstyle{texttheorem}% name
  {8pt}%      Space above
  {8pt}%      Space below
  {\itshape}%         Body font
  {}%         Indent amount (empty = no indent, \parindent = para indent)
  {\bf}% Thm head font
  {. \hspace{5pt}}%        Punctuation after thm head
  {3pt}%     Space after thm head: " " = normal interword space;
\theoremstyle{thm}
\newtheorem*{theorem*}{Theorem}
\newtheorem{theorem}{Theorem}[subsection]
\newtheorem{thm-def}{Theorem/Definition}[theorem]
\newtheorem{proposition}[theorem]{Proposition}
\newtheorem{lemma}[theorem]{Lemma}
\numberwithin{equation}{subsection}
\theoremstyle{def}
\newtheorem{definition}[theorem]{Definition}
\theoremstyle{rem}
\newtheorem*{remark}{Remark}
\theoremstyle{texttheorem}
\newcommand{\cinfty}{C^{\infty}}
\newcommand{\numberedparagraph}{\subsection{}}
\newcommand{\F}{\mathcal{F}}
\newcommand{\G}{\mathcal{G}} 
\newcommand{\EL}{\mathcal{EL}}
\newcommand{\hocolim}{\op{hocolim}}
\theoremstyle{thm}
\begin{document}
\title{A geometric construction of the Witten genus, I}
\author{Kevin Costello}
\begin{abstract}
I describe how the Witten genus of a complex manifold $X$ can be seen from a rigorous analysis of a certain two-dimensional quantum field theory of maps from a surface to $X$. 
\end{abstract}

\maketitle

\section{Introduction}
This paper will describe an application of my work on the foundations of quantum field theory (much of it joint with Owen Gwilliam) to topology.  I will show how to construct the Witten genus of a complex manifold $X$ from a rigorous analysis of a quantum field theory of maps from an elliptic curve to $X$. 

Usually the Witten genus is defined by its $q$-expansion.   From this point of view, modularity of the Witten genus is not obvious.      In the construction presented here, however, we find \emph{directly} a function on the moduli space of (suitable decorated) elliptic curves.  It is only after careful calculation that we can compute the $q$-expansion of this function and identify it with the Witten class. 

Hopefully, this construction will give some hints about the mysterious geometric origins of elliptic cohomology. 

I am very grateful to Dennis Gaitsgory, Owen Gwilliam, Mike Hopkins, David Kazhdan, Jacob Lurie, Josh Shadlen, Yuan Shen, Stefan Stolz and Peter Teichner for many helpful conversations about the material in this paper. 

\section{Hochschild homology and the Todd class}
Before turning to elliptic cohomology and the Witten class, I will describe the analog of my construction for the Todd class.

The most familiar way in which the Todd class occurs is, of course, in the Grothendieck-Riemann-Roch theorem.   Let me recall the statement.    Let $X$ be a smooth projective variety, and let $E$ be an algebraic vector bundle on $X$.   Then, the Grothendieck-Riemann-Roch theorem states that 
$$
\sum (-1)^i \op{dim} H^i(X,E) = \int_X \op{Td}(T X) \op{ch}(E).
$$

Another (and closely related) way in which the Todd class appears is in the study of deformation quantization.  There is a rich literature on algebraic and non-commutative analogs of the index theorem: see \cite{Fed96,BreNesTsy99}.   Much of this literature concerns index-type statements on quantizations of general symplectic manifolds.    For the purposes of this paper, we are only interested in the relatively simple case when we are quantizing the cotangent bundle of a complex manifold $X$. 

Let $\Diff_X$ denote the algebra of differential operators on $X$.  Let $\Diff^\hbar_X$ denote the sheaf of algebras on $X$ over the ring $\C[\hbar]$ obtained by forming the Rees algebra of the filtered algebra $\Diff_X$.  Explicitly,
$$
\Diff^\hbar_X \subset \Diff_X \otimes \C[\hbar]
$$
is the subalgebra consisting of those finite sums
$$
\sum \hbar^i D_i
$$
where $D_i$ is a differential operator of order at most $i$. Thus, $\Diff^\hbar_X$ is a $\C[\hbar]$ algebra whose specialization to $\hbar = 0$ is the commutative algebra $\Oo_{T^\ast X}$ of functions on the cotangent bundle of $X$.  When specialized to a non-zero value of $\hbar$,  $\Diff^\hbar_X$ is just $\Diff_X$. 

\numberedparagraph
The theorem we are interested in states that the Todd class of $X$ appears when one compute the Hochschild homology of the algebra $\Diff^\hbar_X$.    The index theorem concerns, ultimately, traces of differential operators.   Since $HH(  \Diff^\hbar_X)$ is the universal recipient of a trace on the algebra $\Diff^\hbar_X$, it is perhaps not so surprising that the Todd class should appear in this context.  

\numberedparagraph
Recall that the Hochschild-Kostant-Rosenberg theorem gives a quasi-isomorphism
$$
\mc{I}_{HKR} : HH ( \Oo_X) \iso \Omega^{-\ast}(X).
$$
Here $HH ( \Oo_X)$ refers to the sheaf of Hochschild chains of $\Oo_X$, and $\Omega^{-\ast}(X)$ refers to the algebra of forms of $X$, with reversed grading.   Applied to the cotangent bundle of $X$, the Hochschild-Kostant-Rosenberg theorem gives an isomorphism
$$
\mc{I}_{HKR} : HH( \Oo_{T^\ast X} ) \iso \Omega^{-\ast} ( T ^\ast X).
$$
The algebra $\Diff^\hbar_X$ is a deformation quantization of $\Oo_{T^\ast X}$.  We will see that the Todd genus appears when we study how $HH( \Oo_{T^\ast X})$ changes when we replace $\Oo_{T^\ast X}$ by $\Diff^\hbar_X$.

\numberedparagraph
Before we state the theorem, we need some notation.  Let $\pi \in  \Gamma ( T^\ast X, \wedge^2 T  ( T^\ast X) )$ denote the canonical Poisson tensor on $T^\ast X$.  Let 
$$
L_\pi : \Omega^i ( T^\ast X) \to \Omega^{i-1} (T^\ast X)
$$
denote the operator of Lie derivative with respect to $\pi$.  Thus, if $i_\pi$ is contraction by $\pi$, 
$$
L_\pi = [ i_\pi, \d_{dR} ].
$$
Note that $L_\pi^2 = 0$, so that $\Omega^{-\ast}(T^\ast X)$ becomes a cochain complex when endowed with differential $L_\pi$.  The cohomology of this complex is called Poisson homology. 

Let
$$
\op{Td}(X) \in H^0( X, \Omega^{-\ast}(X)) = \oplus H^i(X, \Omega^i ) 
$$
be the Todd class of $X$.  Note that the reversal of grading in the de Rham complex means that $\op{Td}(X)$ is an element of cohomological degree $0$. 

The first statement of the theorem is as follows.
\begin{theorem}[Fedosov \cite{Fed96}, Bressler-Nest-Tsyan \cite{BreNesTsy99}]
There is a natural quasi-isomorphism of cochain complexes
$$
HH  ( \Diff^{\hbar}_X ) \simeq \left(  \Omega^{-\ast}  ( T ^\ast X ) [\hbar],  \hbar L_{\pi}  \right )
$$
sending $1 \in HH ( \Diff^{\hbar}_X)$ to 
$$\op{Td}(X) \in \R \Gamma(X, \Omega^{-\ast} (X) ).$$
\end{theorem}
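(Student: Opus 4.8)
The plan is to combine a local computation on a polydisc --- where $\Diff^\hbar_X$ becomes the Rees algebra of a Weyl algebra --- with a Gelfand--Fuks-style descent, in which the Todd class appears as the characteristic class controlling how the local identifications glue. To begin, I would form the sheaf of Hochschild chain complexes $C_\ast(\Diff^\hbar_X)$, a complex of flat $\C[\hbar]$-modules whose reduction modulo $\hbar$ is $C_\ast(\Oo_{T^\ast X})$. Because $\Diff^\hbar_X$ is filtered with smooth commutative associated graded $\Oo_{T^\ast X}$, the $\hbar$-adic filtration produces a spectral sequence with $E_1 = HH_\ast(\Oo_{T^\ast X})[\hbar] \iso \Omega^{-\ast}(T^\ast X)[\hbar]$ via $\mc{I}_{HKR}$, and with $d_1$ given by the action on Hochschild homology of the first-order term of the deformed product. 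That term is the Poisson bivector $\pi$, and under the HKR identifications of $HH^\ast(\Oo_{T^\ast X})$ with polyvector fields and of $HH_\ast$ with forms, cap product with $\hbar\pi$ acts as $\hbar L_\pi = \hbar[\iota_\pi,\d_{dR}]$. So the differential is forced to be $\hbar L_\pi$ already at $E_2$; what remains is to promote this to a chain-level quasi-isomorphism and to keep track of the class of $1$, and since both $HH(\Diff^\hbar_X)$ and $(\Omega^{-\ast}(T^\ast X)[\hbar],\hbar L_\pi)$ are sheaves of complexes on $X$, it suffices to do this locally and then take $\R\Gamma$.

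Next I would treat the local model. Over a polydisc (or a formal neighbourhood of a point of $X$), $\Diff^\hbar_X$ is the Rees algebra of the Weyl algebra, the $\C[\hbar]$-algebra on generators $x_i,\partial_i$ with $[\partial_i,x_j]=\hbar\delta_{ij}$. For this algebra one has a finite Koszul-type bimodule resolution --- the flat deformation of the usual Koszul resolution of a polynomial ring, which remains a resolution precisely because the associated graded algebra is the smooth algebra $\Oo_{T^\ast X}$ --- and computing Hochschild chains against it yields, on the nose, the complex $(\Omega^{-\ast}(T^\ast X)[\hbar],\hbar L_\pi)$, with the Hochschild class of $1$ sent to the constant function $1$, which is the Todd class of a polydisc. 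This proves the theorem locally.

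The subtle part is globalization. The local quasi-isomorphisms just described are canonical only up to homotopy, so assembling them into a single global quasi-isomorphism requires a compatible choice of homotopies over overlaps, and the ambiguity is measured by the cohomology of the jet bundle of $\Diff^\hbar_X$ equipped with its Grothendieck (Fedosov) connection --- equivalently, by the Atiyah class $\Atiyah_X \in H^1(X,\Omega^1_X\otimes\End T_X)$ of $X$. Concretely, one chooses a Fedosov-type flat connection on $\Diff^\hbar_X$ trivialising it locally to the bundle of Weyl algebras, transports the local identification of Hochschild homology with the twisted de Rham complex along this connection, and observes that flatness forces the image of the globally defined class $1$ to equal the characteristic class of the connection. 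A universality argument --- this class is multiplicative in $X$, depends only on $T_X$, is a formal power series in $\Atiyah_X$, and is computed for $X$ a point (and, to fix normalizations, on $\mathbb{P}^1$) by the Weyl-algebra calculation above --- then identifies it with $\det\!\left(\frac{\Atiyah_X}{1-e^{-\Atiyah_X}}\right)$, i.e.\ with $\op{Td}(X)$, sitting inside $\R\Gamma(X,\Omega^{-\ast}(X)) \subset \R\Gamma(T^\ast X,\Omega^{-\ast}(T^\ast X))$, where it is automatically $L_\pi$-closed because it is pulled back from $X$.

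I expect the main obstacle to be exactly this last point: showing that the gluing anomaly is the \emph{Todd} polynomial in the Atiyah class rather than merely some characteristic class of the right shape. This is the heart of Fedosov's algebraic index computation (reproved by Bressler--Nest--Tsygan), and it requires either an explicit computation in the Gelfand--Fuks cohomology $H^\ast(W_n;GL_n)$ of formal vector fields, singling out the relevant cocycle, or the universality-and-normalization argument above carried through carefully enough to pin down every coefficient. The other ingredients --- the HKR theorem, flatness of $C_\ast(\Diff^\hbar_X)$ over $\C[\hbar]$, the Weyl-algebra Koszul computation, and the identification $d_1 = \hbar L_\pi$ --- are standard, though none is short.
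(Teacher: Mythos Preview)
The paper does not prove this statement: it is quoted as a background result, attributed to Fedosov and to Bressler--Nest--Tsygan, and no argument is given beyond the attribution. So there is no ``paper's own proof'' to compare against; the theorem serves only as motivation for the later factorization-algebra statement about the Witten class.

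That said, your sketch is a fair outline of the approach in the cited references: compute $HH$ of the Rees--Weyl algebra locally via a Koszul resolution, identify the first-order differential as $\hbar L_\pi$ through the HKR map, and globalize by Fedosov/Gelfand--Fuks formal geometry, with the Todd class emerging as the characteristic class of the formal-geometry descent. The one place your outline is genuinely thin is the ``universality-and-normalization'' argument: knowing the class is a multiplicative power series in the Atiyah class and checking it on a point and on $\mathbb{P}^1$ does not determine all coefficients --- you would need the full sequence of projective spaces, or an honest Gelfand--Fuks computation identifying the relevant cocycle in $H^\ast(W_n; GL_n)$, to pin down $\op{Td}$ rather than some other genus. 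You flag this yourself as the expected obstacle, which is accurate; just be aware that the normalization you propose is not sufficient as stated.
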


\numberedparagraph
This is a rather weak formulation of the theorem, because both sides in the quasi-isomorphism are simply cochain complexes.  There is a refined version which identifies a certain algebraic structure present on both sides.    It will take a certain amount of preparation to state this refined version.

The operator $L_\pi$ is an order two differential operator with respect to the natural product on $\Omega^{-\ast}(T^\ast X)$. We will let $\{-,-\}_{\pi}$ denote the Poisson bracket on $\Omega^{-\ast}( T^\ast X)$ of cohomological degree $1$ defined by the standard formula
$$
\{a,b\}_{\pi} = L_\pi (a b ) - (L_\pi a ) b - (-1)^{\abs{a}} a L_\pi b. 
$$
The bracket $\{-,-\}_{\pi}$ is of cohomological degree $1$, and satisfies the standard Leibniz rule. Further, $L_\pi$ is a derivation for the bracket $\{-,-\}_{\pi}$. 

\begin{theorem}
There is a quasi-isomorphism of cochain complexes
$$
HH( \Diff^\hbar_X ) \simeq \left(  \Omega^{-\ast}  ( T ^\ast X ) [\hbar],  \hbar L_{\pi}  +  \hbar \{\log \op{Td}(X) , -  \}_\pi \right ).
$$
\end{theorem}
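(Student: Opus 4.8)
The plan is to deduce the refined statement from the preceding theorem of Fedosov and Bressler--Nest--Tsyan (the ``weak'' form) by realising the extra operator $\hbar\{\log\op{Td}(X),-\}_\pi$ as the result of twisting the differential $\hbar L_\pi$ by a Maurer--Cartan element, and then identifying that element. The triple $\big(\Omega^{-\ast}(T^\ast X)[\hbar],\ \hbar L_\pi\big)$, together with its commutative product and the bracket $\hbar\{-,-\}_\pi$, is a Batalin--Vilkovisky algebra -- this is precisely what the paragraph preceding the theorem records (the operator $\hbar L_\pi$ is second order, squares to zero, and has $\hbar\{-,-\}_\pi$ for its obstruction to being a derivation). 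For such an algebra an element $\gamma$ of cohomological degree $0$ with $L_\pi\gamma+\tfrac12\{\gamma,\gamma\}_\pi=0$ is a Maurer--Cartan element, and a short computation with the second-order operator $\hbar L_\pi$ -- expanding $\hbar L_\pi(e^{\gamma}a)$ and using that the degree-zero term is the Maurer--Cartan expression -- shows that multiplication by $e^{\gamma}$ is then an isomorphism of cochain complexes from $\big(\Omega^{-\ast}(T^\ast X)[\hbar],\ \hbar L_\pi+\hbar\{\gamma,-\}_\pi\big)$ to $\big(\Omega^{-\ast}(T^\ast X)[\hbar],\ \hbar L_\pi\big)$.

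Next I would identify the Maurer--Cartan element: it should be $\gamma=\log\op{Td}(X)$, pulled back to $T^\ast X$ along the projection $p\colon T^\ast X\to X$, which has cohomological degree $0$ because $\op{Td}(X)$ does. To see that it satisfies the Maurer--Cartan equation, note that the canonical Poisson tensor $\pi$ pairs base directions with fibre directions, so the contraction $i_\pi$ annihilates every pull-back $p^\ast\beta$ (such a form involves no fibre one-forms); hence $L_\pi=[i_\pi,\d_{dR}]$ kills $p^\ast\beta$, and therefore $\{p^\ast\alpha,p^\ast\beta\}_\pi=0$ as well. Applied to $\gamma=\log\op{Td}(X)$ -- which makes sense exactly because $\op{Td}(X)$ lies in the diagonal part $\bigoplus_i H^i(X,\Omega^i_X)$, so its logarithm does too -- this gives $L_\pi\gamma=0$ and $\{\gamma,\gamma\}_\pi=0$. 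Thus multiplication by $\op{Td}(X)$ identifies the right-hand side of the theorem with $\big(\Omega^{-\ast}(T^\ast X)[\hbar],\ \hbar L_\pi\big)$, sending $1$ to $\op{Td}(X)$; composing with the quasi-isomorphism of the preceding theorem, which also sends $1\mapsto\op{Td}(X)$, and inverting, produces the asserted quasi-isomorphism, now normalised so that $1\in HH(\Diff^\hbar_X)$ corresponds to $1$, and compatible with the bracket $\{-,-\}_\pi$ -- the ``algebraic structure present on both sides''.

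The substantive point -- why $\op{Td}(X)$, rather than another characteristic class or nothing at all, is forced -- is not this bookkeeping but the input behind the weak theorem and its compatibility with the module structure of Hochschild chains over Hochschild cochains. Concretely, the formality quasi-isomorphism for the Hochschild \emph{chains} of a deformation quantization, regarded as a module over the (Kontsevich/Tamarkin) formal Hochschild cochains, differs from the symmetrised Hochschild--Kostant--Rosenberg map by the action of $\op{Td}^{1/2}(X)$ -- this is Kontsevich's compatibility conjecture for the cup and cap products, established by Calaque--Rossi--Van den Bergh and by Willwacher, and it is what the $\hbar\{\log\op{Td}(X),-\}_\pi$ term (equivalently the $1\mapsto\op{Td}(X)$ normalisation in the weak form) encodes. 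I expect keeping the exact power straight through the various symmetrisations -- $\op{Td}$ versus $\op{Td}^{1/2}$ versus $\hat A$ -- to be the only real subtlety, and it is pinned down by requiring the two occurrences of $\op{Td}(X)$ above to match. One could alternatively re-derive everything from the one-dimensional ``cotangent'' sigma model into $X$ -- whose factorization homology along a circle is $HH(\Diff^\hbar_X)$, and in which the $\op{Td}$-twist appears as a one-loop anomaly of the Batalin--Vilkovisky quantization -- obtaining the correctly normalised statement directly; this is the toy model for the elliptic construction to come.
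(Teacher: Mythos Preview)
Your proposal is correct and matches the paper's approach: the paper gives no proof beyond the single sentence ``The isomorphism in this theorem is related to that of the previous formulation by conjugating by $\op{Td}(X)$,'' and your first two paragraphs are precisely a careful unpacking of that conjugation (showing that multiplication by $e^{\gamma}=\op{Td}(X)$ intertwines $\hbar L_\pi+\hbar\{\gamma,-\}_\pi$ with $\hbar L_\pi$ once $\gamma=\log\op{Td}(X)$ is seen to satisfy the Maurer--Cartan equation, which it does trivially as a pull-back from $X$). Your third paragraph on the provenance of the Todd class is additional context the paper does not supply here.
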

The isomorphism in this theorem is related to that of the previous formulation by conjugating by $\op{Td}(X)$. 

\numberedparagraph
The isomorphism appearing in this second formulation is the one that is compatible with an additional algebraic structure.  The structure is that of an algebra over a certain operad, introduced by Beilinson and Drinfeld \cite{BeiDri04}. 
\begin{definition}
A \emph{Beilinson-Drinfeld algebra} (or BD algebra) is a flat graded $\C[\hbar]$ module $A$ endowed with the following structures.
\begin{enumerate}
\item A commutative unital product.
\item A Poisson bracket $\{-,-\}$ of cohomological degree $1$.
\item A differential $D: A \to A$ of cohomological degree $1$, satisfying $D^2 = 0$ and $D 1 = 0$, such that
$$
D ( a b ) = (D  a ) b + (-1)^{\abs{a}} a (D b) + \hbar \{a,b\}.
$$ 
\end{enumerate}   

A \emph{filtered} Beilinson-Drinfeld algebra is a BD algebra $A$ with a $\C^\times$ action, such that the operator of multiplication by $\hbar$ is of $\C^\times$ weight $1$, the differential and the product on $A$ are of weight $0$, and the Poisson bracket $\{-,-\}$ is of weight $-1$.
\end{definition}
\begin{remark}
The term \emph{filtered} is used because of the well-known isomorphism between filtered vector spaces and $\C^\times$ equivariant vector bundles on $\C$; or equivalently, $\C^\times$-equivariant flat modules over the ring $\C[\hbar]$, where $\hbar$ is given weight $1$.  
\end{remark}
The complex $HH ( \Diff^\hbar_X )$ is endowed with the structure of filtered BD algebra in a natural way. 

The complex $\Omega^{-\ast}( T^\ast X ) [\hbar]$ also has the structure of filtered BD algebra.  The product on $\Omega^{-\ast}( T^\ast X ) [\hbar]$ is the ordinary wedge product of forms. The $\C^\times$ action on $\Omega^{-\ast}(T^\ast X)$ arises from rescaling the fibres of the cotangent bundle $T^\ast X$.      The bracket on $\Omega^{-\ast}(T^\ast X)$ is $\{-,-\}_\pi$, and the differential $\hbar L_\pi +  \hbar \{\log \op{Td}(X) , -  \} $.
\begin{proposition}
The quasi-isomorphism
$$
HH( \Diff^\hbar_X ) \simeq \left(  \Omega^{-\ast}  ( T ^\ast X ) [\hbar],  \hbar L_{\pi}  +  \hbar \{\log \op{Td}(X) , -  \} \right ).
$$
is a quasi-isomorphism of (sheaves of) filtered BD algebras. 
\end{proposition}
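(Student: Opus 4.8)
The plan is to reduce the statement to a local computation on formal polydisks --- where it concerns only the Weyl algebra and where $\op{Td}(X) = 1$ --- and then to globalize by the Fedosov / Gelfand-Kazhdan formal-geometry method, the Todd class emerging precisely as the obstruction to naively gluing the local identifications. Both sides are sheaves of filtered BD algebras and the asserted map is a morphism of sheaves, and being a quasi-isomorphism as well as all of the BD-algebra axioms may be checked stalkwise (indeed on completed local rings); so it is enough to treat $X$ a formal polydisk, in which case $\Diff^\hbar_X$ is the Rees algebra $W_\hbar$ of the Weyl algebra, $\pi$ is the standard symplectic bivector, and one must exhibit a filtered-BD quasi-isomorphism $HH(W_\hbar) \simeq (\Omega^{-\ast}(T^\ast X)[\hbar],\ \wedge,\ \{-,-\}_\pi,\ \hbar L_\pi)$.

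First I would fix the filtered BD structure on $HH(\Diff^\hbar_X)$: the product is the (deformed) shuffle product on Hochschild chains, the differential is the $\C[\hbar]$-linear Hochschild differential, and the degree-$1$ bracket comes from the action of Hochschild \emph{cochains} on chains together with the class of $\pi$ --- the ``calculus'' structure, as in Beilinson-Drinfeld and Costello-Gwilliam --- while the $\C^\times$-action is induced by the order filtration on $\Diff^\hbar_X$. Setting $\hbar = 0$ specializes this to the $P_0$-algebra $C_\ast(\Oo_{T^\ast X})$ (shuffle product, $\pi$-bracket), and the classical Hochschild-Kostant-Rosenberg map is a quasi-isomorphism of $P_0$-algebras onto $\Omega^{-\ast}(T^\ast X)$ with its wedge product and the bracket $\{-,-\}_\pi$; there is no Todd correction at this order because that correction is a feature of the quantization and is proportional to $\hbar$.

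The core of the argument is then a deformation statement over $\C[\hbar]$: the $\hbar = 0$ quasi-isomorphism of $P_0$-algebras lifts to a quasi-isomorphism of filtered BD algebras from $HH(W_\hbar)$ onto $(\Omega^{-\ast}(T^\ast X)[\hbar],\ \wedge,\ \{-,-\}_\pi,\ \hbar L_\pi + \hbar\{\gamma,-\}_\pi)$ for a degree-$0$, weight-$0$ class $\gamma$ that is well defined up to an exact term --- the residual freedom being postcomposition with multiplication by $e^\gamma$, which alters the differential exactly by $\hbar\{\gamma,-\}_\pi$ via the identity $T^{-1}\hbar L_\pi(T\cdot a) = \hbar L_\pi a + \hbar\{\log T, a\}_\pi$, valid whenever $T$ is an $L_\pi$-closed form pulled back from $X$. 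Concretely, the existence of such a lift is the Shoikhet-Dolgushev chain-formality theorem for the Weyl quantization, enhanced to be compatible simultaneously with the cup and cap products (Willwacher, Calaque-Rossi); the $\C^\times$-weights are what rigidify the resulting homotopy-BD morphism to a strict quasi-isomorphism, since the relevant obstruction and ambiguity groups sit in strictly negative weight and hence vanish. Passing to the global statement, the local formality maps are equivariant for formal coordinate changes, so they descend by Gelfand-Kazhdan / Fedosov descent; the unrefined Theorem above, which pins the image of $1$ to $\op{Td}(X)$, then forces $\gamma = \log\op{Td}(X)$ after normalizing by the conjugation by $\op{Td}(X)$ described in the text.

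I expect the main obstacle to be, both locally and globally, the compatibility with the \emph{product and the bracket together} --- that is, with the full BD operad, not merely with the underlying complexes or with the commutative structure alone --- which is exactly what distinguishes this refined statement from the unrefined Theorem: one needs chain-level formality that is compatible with the $HH^\ast$-module structure \emph{and} the algebra structure (``cyclic formality with cup products''), and one needs this choice to be natural enough to survive the formal-geometry descent. Granting that input, the remaining checks --- that the shuffle product and $\pi$-bracket go to $\wedge$ and $\{-,-\}_\pi$, that the $\C^\times$-weights match on the nose, and that conjugating by $\op{Td}(X)$ produces the differential $\hbar L_\pi + \hbar\{\log\op{Td}(X),-\}_\pi$ --- are routine.
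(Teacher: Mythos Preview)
The paper does not actually supply a proof of this proposition: it is stated without argument, immediately following the attributed theorem of Fedosov and Bressler--Nest--Tsygan, and the text moves directly on to the uniqueness lemma. The proposition functions here as a refinement quoted from (or implicit in) that body of work rather than as something the paper proves. So there is no ``paper's own proof'' to compare your proposal against.

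That said, your outline is a plausible sketch of how such a result is established in the literature. Localizing via Gelfand--Kazhdan formal geometry / Fedosov connections and appealing to chain-level formality (Shoikhet--Dolgushev, with the algebraic enhancements you mention) is the standard route, and you correctly flag the genuinely delicate point: compatibility with the \emph{full} BD structure, i.e.\ simultaneously with the product and the degree-$1$ bracket, not just with the complexes. Two cautions. First, your description of the BD structure on $HH(\Diff^\hbar_X)$ --- shuffle product plus a bracket coming from the calculus structure and the class of $\pi$ --- is more a heuristic than a definition; making that precise at chain level, and checking it matches whatever BD structure the cited authors use, is real work and is where a referee would press you. Second, invoking the unrefined theorem to pin down $\gamma=\log\op{Td}(X)$ after descent is fine logically, but you should be explicit that the descent step itself already produces a specific $\gamma$ (via the characteristic map of the Gelfand--Fuks class controlling the formal automorphism action), and that the comparison with the unrefined statement is a consistency check rather than the mechanism by which the Todd class appears.
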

This refined statement is enough to fix the Todd class uniquely, as the following lemma shows.
\begin{lemma}
Let 
$$\alpha, \alpha' \in H^0( X, \oplus_{i \ge 1} \Omega^{i} X[i]) = \oplus_{i \ge 1} H^i(X, \Omega^i X) $$ be forms on $X$, with no zero-form component,  and suppose that we have a quasi-isomoprhism sheaves of filtered $BD$ algebras
$$
\left( \Omega^{-\ast}(T^\ast X) [\hbar], \hbar L_\pi + \hbar \{\alpha, - \} \right) \simeq 
\left( \Omega^{-\ast}(T^\ast X) [\hbar], \hbar L_\pi + \hbar \{\alpha', - \} \right).
$$
Then 
$$
[\alpha] = [\alpha'] \in \oplus_{i > 0} H^i(X, \Omega^i(X)) .
$$
\end{lemma}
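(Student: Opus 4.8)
The plan is to strip the filtered BD structure down to its first-order-in-$\hbar$ part, which is a deformation-theoretic datum living in a cohomology group computable by a Hochschild--Kostant--Rosenberg-type argument, and to identify that datum with $[\alpha]$. The $\hbar=0$ specializations of the two filtered BD algebras both equal $\Omega^{-\ast}(T^\ast X)$ with its wedge product, the bracket $\{-,-\}_\pi$, and (in a Dolbeault model, in which $\alpha$ and $\alpha'$ are represented by honest $\bar\partial$-closed cocycles) the differential $\bar\partial$, so reducing the hypothesized quasi-isomorphism mod $\hbar$ yields nothing, and one must work mod $\hbar^2$. Mod $\hbar^2$ the filtered BD structure is exactly the operator $\Delta_\alpha:=L_\pi+\{\alpha,-\}_\pi$, a square-zero order-two operator whose failure to be a derivation of the wedge product is the bracket $\{-,-\}_\pi$. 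Since any two such operators differ by a derivation of the wedge product, and $L_\pi$ is a canonical choice of one, the content of $\Delta_\alpha$ is the derivation $V_\alpha:=\Delta_\alpha-L_\pi=\{\alpha,-\}_\pi$ of $\Omega^{-\ast}(T^\ast X)$, of cohomological degree $+1$ and $\C^\times$-weight $-1$; the identities $L_\pi\alpha=0=\bar\partial\alpha$ and $\{\alpha,\alpha\}_\pi=0$, all forced because $\alpha$ is a closed form pulled back from $X$, say precisely that $V_\alpha$ is a Maurer--Cartan element of the differential graded Lie algebra $\mf{g}$ of $\C^\times$-weight $-1$ derivations of $\Omega^{-\ast}(T^\ast X)$, equipped with differential $[\bar\partial+L_\pi,-]$ and commutator bracket.

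Next I would extract what a quasi-isomorphism provides. Modeling it (after refining the sheaf model if necessary) by an honest morphism $\Phi$ and writing $\Phi=\Phi_0+\hbar\Phi_1+\cdots$ — legitimate since $\Phi$ is $\C[\hbar]$-linear and $\C^\times$-equivariant with $\hbar$ of weight $1$ — the leading term $\Phi_0$ is a $\C^\times$-equivariant automorphism of $\Omega^{-\ast}(T^\ast X)$ preserving the wedge product, $\{-,-\}_\pi$ and $\bar\partial$. Because the equivalence is one of sheaves over $X$, hence $\Omega^{-\ast}(X)$-linear via the canonical algebra map $\Omega^{-\ast}(X)=HH(\Oo_X)\to HH(\Diff^\hbar_X)$, the automorphism $\Phi_0$ is the identity on $\C^\times$-weight $0$, and one checks that transporting the BD structure along $\Phi_0$ alters neither side's class in $\bigoplus_{i\ge1}H^i(X,\Omega^i_X)$, so one may assume $\Phi_0=\mathrm{id}$. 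The order-$\hbar$ part of $\Phi\circ(\bar\partial+\hbar\Delta_\alpha)=(\bar\partial+\hbar\Delta_{\alpha'})\circ\Phi$ then reads $V_\alpha-V_{\alpha'}=[\bar\partial,\Phi_1]$, with $\Phi_1$ a $\C^\times$-weight $-1$ derivation of the wedge product.

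It remains to deduce $[\alpha]=[\alpha']$ from $\{\alpha-\alpha',-\}_\pi=[\bar\partial,\Phi_1]$. Any $\C^\times$-weight $-1$ derivation of $\Omega^{-\ast}(T^\ast X)$ annihilates the functions and $1$-forms pulled back from $X$, so it is determined by its values on the fibrewise-linear coordinates $\xi_j$ (and on the $d\xi_j$), which lie in $\Omega^{-\ast}(X)$; moreover $\bar\partial\xi_j=0$ and $\Phi_1(f\xi_j)=f\,\Phi_1(\xi_j)$ for $f\in\Oo_X$. Using $\{\alpha-\alpha',\xi_j\}_\pi=\pm\,i_{\partial_{x^j}}(\alpha-\alpha')$, evaluating the relation on $\xi_j$ gives $i_{\partial_{x^j}}(\alpha-\alpha')=\pm\,\bar\partial(\Phi_1\xi_j)$ for every $j$, so each contraction of $\alpha-\alpha'$ is $\bar\partial$-exact, with potentials that, by $\Oo_X$-linearity, glue into a single $\Omega^1_X$-valued form; feeding this through the wedge-multiplication splitting of $\Omega^i_X\hookrightarrow\Omega^{i-1}_X\otimes\Omega^1_X$ (valid for $i\ge1$, which is where "no zero-form component" is used) turns it into the statement that each component $\alpha_i-\alpha'_i$ is itself $\bar\partial$-exact, i.e. $[\alpha_i]=[\alpha'_i]$ in $H^i(X,\Omega^i_X)$ for all $i\ge1$. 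Conceptually this identifies $\bigoplus_{i\ge1}H^i(X,\Omega^i_X)$ with a summand of $H^1(\mf{g})$, computed by the Koszul/HKR mechanism underlying the Poisson homology of $T^\ast X$, with $\alpha\mapsto[V_\alpha]$ the tautological map.

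The step I expect to be the main obstacle is the reduction of $\Phi_0$ to the identity: ruling out $\C^\times$-equivariant Poisson automorphisms of $\Omega^{-\ast}(T^\ast X)$ that are trivial on $\C^\times$-weight $0$ yet act nontrivially on $\bigoplus_{i\ge1}H^i(X,\Omega^i_X)$. This is exactly where one must exploit the (otherwise implicit) $\Omega^{-\ast}(X)$-linearity and $\bar\partial$-compatibility of the equivalence; the HKR-type bookkeeping in the previous paragraph is, by comparison, routine.
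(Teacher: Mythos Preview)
The paper states this lemma without proof; there is no argument in the text to compare your proposal against. What follows is therefore an assessment of your proposal on its own terms.

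Your overall strategy---extracting the first-order-in-$\hbar$ data as a Maurer--Cartan element in a dg Lie algebra of $\C^\times$-weight $-1$ derivations, and then showing that the assignment $\alpha\mapsto\{\alpha,-\}_\pi$ is injective on $\bigoplus_{i\ge1}H^i(X,\Omega^i_X)$ modulo the equivalence generated by such quasi-isomorphisms---is the natural deformation-theoretic reading of the statement and is on the right track. Two points deserve more care than you give them. First, a quasi-isomorphism of sheaves of filtered BD algebras is a priori a zigzag, not a single honest morphism $\Phi$; your parenthetical ``after refining the sheaf model if necessary'' hides a genuine step, since you must argue that both objects are fibrant/cofibrant in an appropriate model structure (or otherwise rectify the zigzag) before writing $\Phi=\Phi_0+\hbar\Phi_1+\cdots$. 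Second, and as you yourself flag, the reduction of $\Phi_0$ to the identity is the crux: you invoke $\Omega^{-\ast}(X)$-linearity coming from the sheaf structure over $X$, but what is actually needed is that any $\C^\times$-equivariant automorphism of the $P_0$-algebra $\Omega^{-\ast}(T^\ast X)$ (in its Dolbeault model, commuting with $\bar\partial$) which is the identity on the weight-$0$ piece acts trivially on the relevant deformation class. This is plausible but is not literally what you wrote down, and the passage from ``$\Phi_0$ is the identity on weight $0$'' to ``we may assume $\Phi_0=\mathrm{id}$'' needs an explicit argument.

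The final HKR-type step---passing from $\bar\partial$-exactness of the contractions $i_{\partial_{x^j}}(\alpha-\alpha')$, with potentials assembling into a section of a coherent sheaf, to $\bar\partial$-exactness of $\alpha-\alpha'$ itself---is essentially correct but should be phrased more carefully: the values $\Phi_1(\xi_j)$ assemble into a section $\gamma$ of (the Dolbeault resolution of) $T_X$ tensored with the appropriate piece of $\Omega^{-\ast}(X)$, and the identity you want is $\alpha-\alpha'=\bar\partial$ of the image of $\gamma$ under the natural map back to forms; the ``no zero-form component'' hypothesis enters exactly to guarantee this map has the right target.
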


\numberedparagraph

In this paper I will state a generalization of this characterization of the Todd class, in which the Witten class appears in place of the Todd class.

\section{Factorization algebras}
Hochschild homology, $K$-theory and the Todd genus are all intimately concerned with the concept of associative algebra.  In order to understand the Witten genus, one needs to consider a richer algebraic structure called a factorization algebra (or more precisely, a translation-invariant factorization algebra on the complex plane $\C$).
   
Factorization algebras can be defined on any smooth manifold: they can be viewed as a ``multiplicative'' analog of a cosheaf.   In the algebro-geometric context,  factorization algebras were first considered by Beilinson and Drinfeld \cite{BeiDri04}. 

In this section,  I will give the formal definition of a factorization algebra, and state a theorem (from \cite{CosGwi10}) which allows one to construct factorization algebras using the machinery of perturbative renormalization developed in \cite{Cos10}.

The approach to constructing factorization algebras developed in \cite{CosGwi10} is a quantum field theoretic analog of the deformation quantization approach to quantum mechanics. Thus, a classical field theory yields a \emph{commutative} factorization algebra (I will define what this means shortly).   Quantizing a classical field theory  amounts to replacing this commutative factorization algebra by a plain factorization algebra.   Just like the Todd genus of a complex manifold $X$ appears when one considers the deformation quantization of the cotangent bundle $T^\ast X$, we will see that the Witten genus arises when we consider the quantization of a commutative factorization algebra associated to a classical field theory whose fields are maps from a Riemann surface to $T^\ast X$.  

\numberedparagraph
The definition of a factorization algebra is rather straightforward to give.   First I will define the notion of prefactorization algebra.
\begin{definition}
Let $M$ be a manifold.  A prefactorization algebra $\F$ on $M$ is consists of the following data.
\begin{enumerate}
\item For every open set $U \subset M$, a cochain complex of topological vector spaces, $\F(U)$.
\item If $U_1, \ldots, U_k$ are disjoint open sets in $M$, all contained in a larger open set  $V$,  a continuous linear map
$$
\F(U_1) \otimes \cdots \otimes \F(U_k) \to \F(V)
$$
(where we use the completed projective tensor product).  These maps are required to be $S_k$-covariant.
\item These maps must satisfy an evident compability condition, which says that different ways of composing these maps yield  the same answer.  

More precisely, suppose that $V_1,\ldots, V_l$ are disjoint open subsets of $V$ such that each $U_i$ is in some $V_j$.  Then the diagram
$$
\xymatrix{  \otimes_{j = 1}^l \left( \otimes_{U_i \subset V_j} \F(U_i) \right) \ar[r] \ar[d]  &  \F(V) \\ 
\otimes_{j = 1}^l \F(V_j) \ar[ur] & } 
$$
is required to commute.
\item $\F(\emptyset) = \C$.
\end{enumerate}
\end{definition}

\subsection{}
A factorization algebra is a prefactorization algebra that satisfies the //locality// axiom. This axiom is the analog of the gluing axiom for sheaves; it expresses how the values on big open sets are determined by the values on small open sets. For sheaves, the gluing axiom says that for any open set $U$ and any cover of that open set, we can determine the value of the sheaf on $U$ from the values on the open cover. For factorization algebras, we require our covers to be fine enough that they capture all the ``multiplicative structure.''

\begin{definition}
Let $U$ be an open set and $\mathfrak{U} := \{ U_i \mid i \in I\}$ a cover of $U$ by open sets.  The cover $\mathfrak{U}$ is \emph{factorizing} if  for any finite collection of points $\{x_1,\ldots,x_k\}$ in $U$, there is a finite collection of pairwise disjoint opens $\{U_{i_1}, \ldots, U_{i_n}\}$ from the cover such that $\{x_1,\ldots,x_k\} \subset U_{i_1} \cup \cdots \cup U_{i_n}$.
\end{definition}

\begin{remark} There is a simple way to find a factorizing cover of a Riemannian manifold $M$.  Namely, fix a Riemannian metric on $M$, and  consider
\[
\{ B_r(x) \,:\, \forall x \in M, \text{ with } 0 < r < InjRad(x)\},
\]
the collection of open balls, running over each point $x \in M$, whose radii are less than the injectivity radius at $x$. Another construction is simply to take the collection of open sets in $M$ diffeomorphic to the open $n$-ball.
\end{remark}

\subsection{}
In order to motivate our definition of factorization algebra, let us write briefly recall the cosheaf axiom. Since, in this paper, we are exclusively interested in factorization algebras and cosheaves with values in cochain complexes, we will discuss the homotopical version of the cosheaf axiom.  

This homotopy cosheaf axiom is defined using the \v{C}ech complex associated to an open cover.   Let $\Phi$ be a pre-cosheaf on $M$, and let $\mf{U} = \{U_i \mid i \in I\}$ be a cover of some open subset $U$ of $M$.  The \v{C}ech complex of $\mf{U}$ with coefficients in $\Phi$ is is defined in the usual way, as
$$
\oplus_k \oplus_{j_1,\ldots, j_k \in I} \Phi (U_{j_1} \cap \cdots \cap U_{j_k}  )   [ k - 1] 
$$
where the differential is defined in the usual way.  We say that $\Phi$ is a homotopy cosheaf if the natural map from the \v{C}ech complex to  $\Phi(U)$ is a quasi-isomorphism, for every open $U \subset M$ and every open cover of $U$.

We will define the notion of factorization algebra in a similar way, except that instead of considering elements $U_i$ of the cover, one considers finite collections of disjoint elements of the cover.

In order to make this precise, we need to introduce some notation.  Let $P I$ denote the set of finite subsets $\alpha \subset I$, with the property that if $j,j' \in \alpha$, $U_j \cap U_{j'} = \emptyset$.

If $\alpha \in PI$, let us define $\F(\alpha)$ by
$$
\F(\alpha) = \otimes_{j \in \alpha} \F( U_j ).
$$
Similarly, if $\alpha_1, \ldots, \alpha_k \in P I$, we will let
$$
\F(\alpha_1, \ldots, \alpha_k ) = \otimes_{j_1 \in \alpha_1,\ldots, j_k \in \alpha_k} \F (U_{j_1} \cap \cdots \cap U_{j_k}).
$$
Note that there are natural maps
$$
p_i : \F( \alpha_1,\ldots, \alpha_k ) \to \F( \alpha_1, \ldots, \what{\alpha_i}, \ldots, \alpha_k ) 
$$
for each $1 \le i \le k$.

This allows us to define the \v{C}ech complex of the cover $\mf{U}$ with coefficients in $\F$ by
$$
\check{C} ( \mf{U}, \F) = \oplus_{k \ge 0} \oplus_{\alpha_1,\ldots, \alpha_k \in P I } \F ( \alpha_1, \ldots, \alpha_k) [k-1].
$$
The differential defined in the usual way.
\begin{definition}
A prefactorization algebra is a factorization algebra if, for every open subset $U \subset M$ and every factorizing cover $\mf{U}$ of $U$, the natural map
$$
\check{C}(\mf{U}, \F) \to \F(U)
$$
is a quasi-isomorphism. 
\end{definition}
We call this axiom the \emph{locality} axiom\footnote{In the version of this paper which appears in the ICM proceedings, a slightly weaker locality axiom was posited.   Owen Gwilliam and I have since found that the stronger axiom presented here satisfies better formal properties.}.     This axiom is, of course, only the correct one for factorization algebras taking values in the category of cochain complexes. 

For factorization algebras taking values in vector spaces, one should require that the map from the \v{C}ech complex to $\F(U)$ is an isomorphism on $H_0$.  This amounts to saying that the sequence
$$
\oplus_{\alpha,\beta} \F(\alpha,\beta) \to \oplus_{\gamma}\F(\gamma) \to \F(U) \to 0
$$
is exact on the right.

  A special case of this locality axiom asserts that,  $U_1, U_2$ are disjoint subsets, then 
$$
\F(U_1 \amalg U_2) = \F(U_1) \otimes \F(U_2).
$$
Similarly, if $\{U_i \in i \in I\}$ is any collection of disjoint open subsets of $M$, then
$$
\F(\amalg U_i) = \otimes_{i \in I} \F(U_i) . 
$$
The cochain complexes $\F(U_i)$ are pointed; the map $\emptyset \to U$ induces a map $\F(\emptyset) = \C \to \F(U)$.  The infinite tensor product of pointed vector spaces is defined as the colimit over all finite tensor products, in the usual way. 

\numberedparagraph
The definition of a factorization algebra is reminiscent of that of an $E_n$ algebra.  In fact, Jacob Lurie has shown the following \cite{Lur09a}.
\begin{proposition}
There is an equivalence of $(\infty,1)$-categories between the category of $E_n$ algebras, and the category of factorization algebras $\mc F$  on $\R^n$ with the additional property that if $B \subset B'$ are balls, the map
$$
\mc F(B) \to \mc F(B')
$$ 
is a quasi-isomorphism. 
\end{proposition}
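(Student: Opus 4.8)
The plan is to exhibit functors in both directions and verify that they are mutually inverse at the level of $(\infty,1)$-categories. Let me call a factorization algebra with the stated property \emph{locally constant}. First I would record a preliminary observation: together with the locality axiom, the hypothesis that $\mc F(B) \to \mc F(B')$ is a quasi-isomorphism for nested balls forces all the structure maps built from isotopic configurations of disjoint disks inside a larger disk to agree up to coherent homotopy. This is what will let one identify the relevant structure with an action of the little $n$-disks operad.

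Next I would build the functor from locally constant factorization algebras to $E_n$-algebras by restriction. Given such an $\mc F$, fix a ball $B_0 \subset \R^n$ and set $A := \mc F(B_0)$. For disjoint balls $B_1,\dots,B_k$ inside a larger ball $B$, the disjoint-union case of the locality axiom identifies $\mc F(B_1) \otimes \cdots \otimes \mc F(B_k)$ with $\mc F(B_1 \amalg \cdots \amalg B_k)$, and the prefactorization structure map provides a morphism to $\mc F(B)$. Since the space of such embeddings of $k$ disjoint disks into a disk is homotopy equivalent to the $k$-th space of the little disks operad, and local constancy makes these structure maps depend only on the embedding up to isotopy, this package assembles into an $E_n$-algebra structure on $A$. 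Making the assignment $\mc F \mapsto A$ functorial and homotopy-coherent is the technical heart of this direction.

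Conversely, from an $E_n$-algebra $A$ I would produce a factorization algebra by factorization homology. For open $U \subset \R^n$, set $\mc F_A(U)$ to be the homotopy colimit over the category $\op{Disk}(U)$ of finite disjoint unions of balls embedded in $U$ of the functor sending $\coprod_{i=1}^k B_i$ to $A^{\otimes k}$; I write $\int_U A$ for this. The operad structure on $A$ makes $\mc F_A$ a prefactorization algebra, and $\mc F_A(B) \simeq A$ for a ball $B$ with inclusion maps quasi-isomorphisms, so $\mc F_A$ is locally constant. The real content is the locality axiom: for a factorizing cover $\mf U$ of $U$ one must show the \v{C}ech-to-global map $\check C(\mf U, \mc F_A) \to \mc F_A(U)$ is a quasi-isomorphism. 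This reduces to a cofinality statement — the disjoint unions of balls each contained in some element of $\mf U$ form a homotopy-cofinal subcategory of $\op{Disk}(U)$, which is essentially a Weiss-cover / van Kampen argument — together with the identification of the resulting colimit with the \v{C}ech complex.

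Finally I would check the two constructions are mutually inverse. Starting from $A$, restricting $\mc F_A$ to balls recovers $\int_B A \simeq A$ with its $E_n$-structure by evaluating the defining colimit on a ball. Starting from a locally constant $\mc F$, the factorization homology $\int_U \mc F(B_0)$ is computed against a factorizing cover of $U$ by balls; the defining colimit then matches $\check C(\mf U, \mc F)$, which the locality axiom for $\mc F$ identifies with $\mc F(U)$, producing a natural quasi-isomorphism $\mc F_{\mc F(B_0)} \simeq \mc F$. Assembling these comparisons into natural equivalences of $(\infty,1)$-categories — and keeping track of units through the pointings $\C = \mc F(\emptyset) \to \mc F(U)$, i.e. matching unital $E_n$-algebras with factorization algebras satisfying $\mc F(\emptyset) = \C$ — completes the proof. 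I expect the main obstacle to be homotopy coherence: organizing the operadic structure maps and the (co)limits so that everything is functorial and associative up to all higher homotopies rather than merely on homotopy categories. Once the $\infty$-categorical bookkeeping is in place (via Lurie's $\infty$-operads or a suitable model structure), the only geometric inputs needed are the contractibility of the configuration-of-disks spaces and the cofinality of Weiss covers.
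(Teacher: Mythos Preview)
The paper does not give a proof of this proposition: it is stated as a result of Lurie and simply cited to \cite{Lur09a}. So there is no in-paper argument to compare against.

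That said, your sketch is essentially the strategy carried out in the cited reference: restrict a locally constant factorization algebra to balls to extract an $E_n$-algebra, and go back via factorization (topological chiral) homology $U \mapsto \int_U A$, with the locality axiom verified by a Weiss/cofinality argument. As an outline this is correct; the substantive work you flag --- making the operadic structure maps and the colimits coherent at the level of $(\infty,1)$-categories, and proving the cofinality statement that identifies the \v{C}ech complex for a factorizing cover with the defining colimit --- is exactly what Lurie's machinery supplies. Your write-up is a reasonable roadmap, but be aware that in this paper the proposition functions as an imported black box, so a full proof is neither expected nor provided here.
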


In another direction, what we call a factorization algebra is the $\cinfty$ analog of a definition introduced by Beilinson and Drinfeld \cite{BeiDri04}.  Beilinson and Drinfeld introduced an algebro-geometric version of the notion of factorization algebra, in order to give a geometric formulation of the axioms of a vertex algebra.  In particular, every vertex algebra yields a factorization algebra. 

\numberedparagraph
As our first example of a factorization algebra, let us see how a differential graded associative algebra $A$ gives rise to a translation-invariant factorization algebra $\mc F_A$ on $\R$.  

We will define the value of $\mc F_A$ on the open intervals of $\R$; the value of $\mc F_A$ on more complicated open subsets is formally determined by this data. 

Let $-\infty \le a < b \le \infty$, and let $(a,b)$ be the corresponding (possibly infinite) open interval in $\R$. We set
$$
\mc F_A((a,b)) = A.
$$
If $(a,b) \subset (c,d)$, then the map 
$$\mc F_A( (a,b) )  \to \mc F_A ((c,d))$$
is the identity map on $A$.

If $-\infty \le a_1 < b_1 < a_2 < b_2 < \dots < a_n < b_n \le \infty$, then the intervals $(a_i,b_i)$ are disjoint.  Part of the data of a factorization algebra is thus a map
$$
\mc F_A ((a_1, b_1 ) )  \otimes \cdots \otimes \mc F_A ((a_n, b_n ) )  \to \mc F_A ( (a_1, b_n ) ). 
$$
Once we identify each $\mc F_A ((a_i, b_i ) )$ with $A$, this map is the $n$-fold product map
\begin{align*}
A ^{\otimes n} & \to A\\
\alpha_1 \otimes  \cdots \otimes \alpha_n & \mapsto \alpha_1 \cdot \alpha_2 \cdot \cdots \cdot \alpha_n.
\end{align*} 
The value of $\F_A$ on any other open subset of $\R$ is determined from this data by the axioms of a factorization algebra.

\section{Descent and factorization homology}
In this paper, we are only interested in translation-invariant factorization algebras on $\C$.    In this section, we will see that associated to such a factorization algebra $\F$, and to an elliptic curve $\E$, equipped with a never-vanishing volume element $\omega$, one can define the \emph{factorization homology} 
$$
FH( E, \F ).
$$ 
Factorization homology is the analog, in the world of factorization algebras, of Hochschild homology.

As motiviation, I will first explain how the Hochschild homology groups of an associative algebra $A$ can be viewed as the factorization homology of the translation-invariant factorization algebra $\F_A$ on $\R$ associated to $A$. 
 
\numberedparagraph 
Factorization algebras satisfy a gluing axiom. Suppose that our manifold $M$ is written as a union $M = U \cup V$ of two open subsets. If $\F$ is a factorization algebra on an open subset $U \subset M$, and if $\G$ is a factorization algebra on $V$, and if 
$$
\phi : \F \mid_{U \cap V} \to \G \mid_{U \cap V}
$$
is an isomorphism of factorization algebras on $U \cap V$, then we can construct a factorization algebra $\mc{H}$ on $M$, whose restriction to $U$ is $\F$ and whose restriction to $V$ is $\G$.

Similarly, factorization algebras satisfy descent.  Suppose that a discrete group $G$ acts properly discontinuously on a manifold $M$, and suppose that $\til{\F}$ is a $G$-equivariant factorization algebra on $M$.  Then, $\til{\F}$ descends to a factorization algebra $\F$ on the quotient $M / G$.

\numberedparagraph
Since we will be using the descent property extensively, it is worth explaining how one constructs the descended factorization algebra $\F$. 

Let us start by mentioning a general construction of factorization algebras from partial data. Let $\mf{U}$ be a basis of open sets of $X$ which is also a factorizing cover (that is, $\mf{U}$ is a \emph{factorizing basis}). 
\begin{definition}
A $\mf{U}$-factorization algebra $\F$ is like a factorization algebra, except that $\F((U)$ is only defined for $U \in \mf{U}$. 
\end{definition}
Let $\F$ be a $\mf{U}$-factorization algebra.  Let us define a prefactorization algebra $\op{Fact} (\F)$ on $X$ by
$$
\op{Fact}(\F)(V) = \check{C}(\mf{U}_V, \mc{F}).
$$
Here $\mf{U}_V$ is the cover of $V$ consisting of those open subsets in the cover $\mf{U}$ which are contained in $V$. 
\begin{lemma}
With this definition, $\op{Fact}(\F)$ is a factorization algebra whose restriction to open sets in the cover $\mf{U}$ is quasi-isomorphic to $\F$. 
\end{lemma}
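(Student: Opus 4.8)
The plan is to verify, in order, the prefactorization axioms for $\op{Fact}(\F)$, the comparison $\op{Fact}(\F)|_{\mf U} \simeq \F$ on basis elements, and finally the locality axiom, which is the substantive point.

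First I would write down the prefactorization structure. Each $\op{Fact}(\F)(V) = \check{C}(\mf U_V, \F)$ is a cochain complex whose total differential combines the internal differential of $\F$ with the Čech differential. If $V_1, \dots, V_k$ are disjoint opens inside $W$ and $\alpha_i$ is a finite pairwise-disjoint family from $\mf U_{V_i}$ for each $i$, then $\alpha_1 \cup \dots \cup \alpha_k$ is again pairwise disjoint and lies in $\mf U_W$ precisely because the $V_i$ are disjoint; this assembles into a chain map $\check C(\mf U_{V_1}, \F) \otimes \dots \otimes \check C(\mf U_{V_k}, \F) \to \check C(\mf U_W, \F)$. The $S_k$-equivariance and the compatibility (composition) axioms reduce to the corresponding elementary facts about unions of pairwise-disjoint subsets of the index set, and $\op{Fact}(\F)(\emptyset) = \C$ because the only pairwise-disjoint family inside the empty cover is the empty one. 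All of this is routine.

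Second, the comparison on basis elements. For $U \in \mf U$, the induced cover $\mf U_U$ is itself factorizing: given finitely many distinct points of $U$, Hausdorffness of the manifold provides pairwise-disjoint open neighbourhoods, and since $\mf U$ is a basis these can be shrunk to members of $\mf U$ lying in $U$. The notion of a $\mf U$-factorization algebra includes the local-to-global (Čech descent) property with respect to factorizing covers by members of $\mf U$ — suitably interpreted, since an intersection of members of $\mf U$ need not itself belong to $\mf U$ and must be handled via a cover by smaller basis elements. Applying this to the cover $\mf U_U$ yields the quasi-isomorphism $\op{Fact}(\F)(U) = \check C(\mf U_U, \F) \simeq \F(U)$, and it is immediate from the construction that these quasi-isomorphisms intertwine the prefactorization structure maps.

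The heart of the matter is the locality axiom: for every open $V$ and every factorizing cover $\mf V$ of $V$, the natural map $\check C(\mf V, \op{Fact}(\F)) \to \op{Fact}(\F)(V)$ is a quasi-isomorphism. Here $\check C(\mf V, \op{Fact}(\F))$ is a priori a double Čech complex — the Čech complex along $\mf V$ of the Čech complexes along $\mf U$. I would first pass to the factorizing basis $\mf W$ consisting of those members of $\mf U$ contained in some member of $\mf V$ (again a factorizing basis of $V$, by the same Hausdorff argument), which refines both $\mf V$ and $\mf U_V$. It then suffices to prove: (a) the Čech complex $\check C(-, \F)$ of a $\mf U$-factorization algebra is, up to natural quasi-isomorphism, independent of the choice of factorizing basis, so in particular $\check C(\mf W, \F) \simeq \check C(\mf U_V, \F) = \op{Fact}(\F)(V)$; and (b) the double complex $\check C(\mf V, \op{Fact}(\F))$ collapses, via its spectral sequence, onto $\check C(\mf W, \F)$, using that on each finite intersection of members of $\mf V$ the inner Čech complex already computes $\op{Fact}(\F)$ of that intersection — which is (a) again, applied to the intersection. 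I expect (a), refinement-independence of the Čech complex, to be the main obstacle: the argument is a spectral-sequence comparison for the bicomplex built from a common refinement of two factorizing bases, and the delicate bookkeeping is with the poset $P I$ of finite pairwise-disjoint subsets of the index set, compounded by the fact that $\F$ evaluated on an intersection of basis elements is itself only defined as (a filtered colimit of) Čech complexes of the induced covers; one must also check that the infinite tensor products and filtered colimits implicit throughout commute with passage to cohomology. Once (a) is established, (b) and hence the whole statement follow formally.
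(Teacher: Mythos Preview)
The paper does not actually provide a proof of this lemma: the argument in the \LaTeX\ source is enclosed in a \texttt{comment} environment and hence suppressed in the compiled version. That commented-out sketch, however, follows essentially the same route as your proposal. One passes to the subcover $\til{\mf U}\subset\mf U$ subordinate to the given factorizing cover $\mf U'$ (your $\mf W$), argues that $\check C(\mf U',\op{Fact}(\F))\simeq\check C(\til{\mf U},\F)$ by writing the left-hand side as $\bigoplus_{\alpha_1,\dots,\alpha_n}\F(\alpha_1,\dots,\alpha_n)$ tensored with a combinatorial complex indexed by tuples $(\beta_1,\dots,\beta_m)$ with each $\alpha_i\subset\beta_j$, and observes that this combinatorial factor is the simplicial chain complex of an infinite simplex and hence contractible --- this is exactly your spectral-sequence collapse (b). The final step, that $\check C(\til{\mf U},\F)\simeq\check C(\mf U_V,\F)$, is attributed to the $\mf U$-factorization algebra axiom, which is your refinement-independence (a).

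So your strategy matches the author's intended one. The point you correctly flag as the main obstacle --- refinement-independence of the \v{C}ech complex when intersections of basis elements need not themselves be basis elements and must be resolved by further covers --- is precisely where the paper's sketch stops, and neither source supplies those details.
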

\begin{comment}

\begin{proof}
We need to check that, if $\mf{U}'$ is a factorizing cover of $V \subset X$, that
$$
\op{Fact}(\F)(V) \simeq \check{C}(\mf{U}', \op{Fact}(\F)).
$$

We can replace $\mf{U}$ by the subcover $\til{\mf{U}}$ of open sets subordinate to $\mf{U}'$.  Then,
$$
\check{C}(\mf{U}', \op{Fact}(\F)) \simeq \check{C}(\til{\mf{U}}, \F). 
$$
Indeed, there's clearly a natural map from the left hand side to the right hand side.

The complex on the left break up as a direct sum of pieces corresponding to tuples $\alpha_1,ldots,\alpha_n \in P \til{\mf{U}}$.  If $$\beta \in P \mf{U}'$ and $\alpha \in P \til{\mf{U}}$, say $\alpha \subset \beta$ if  $U_{\alpha} \subset U'_\beta$.  Then, let us define a complex
$$
\check{C}(\mf{U}', \op{Fact}(\F)) = \oplus_{\alpha_1,\ldots,\alpha_n} \F(\alpha_1,\ldots,\alpha_n)[k] \otimes \left( \oplus_{\substack{\beta_1,\ldots,\beta_m \\ \alpha_i \subset \beta_j \text{ all } i,j}} \C \cdots (\beta_1,\ldots,\beta_k) \right) 
$$
Here $(\beta_1,\ldots,\beta_k)$ denotes a vector in degree $-k$. 

We need to show that the complex inside the tensor product is contractible.   We have as usual
$$
\d  (\beta_1,\ldots,\beta_k) = \sum (-1)^i (\beta_1,\ldots,\what{\beta_i}, \ldots,\beta_k).
$$
But this is just the simplicial chains on the complete simplicial set based on vertices $\beta_i$, i.e. on the infinite simplex.

Next, the fact that $\F$ is a $\mf U$-factorization algebra implies that the map
$$
\check{C}(\til{U}, \F) \to \check{C}(\til{U}_V, \F)
$$
is a quasi-isomorphism. 
\end{proof}
\end{comment}

\subsection{}
Now, let us see how this lemma allows us to construct a descended factorization algebra on $M / G$ from a $G$-equivariant factorization algebra $\til{\F}$  on $M$.   Let $\mf{U}$ be the open cover of $M / G$ consisting of connected open sets which admit a section of the map $M \to M / G$. 

We will define a $\mf{U}$-factorization algebra $\F_0$ by saying that, if $U \in \mf{U}$, 
$$
\F_0(V) = \til{\F} ( \til{U} )
$$
where $\til{U} \subset M$ is a lift of $U$ .  Since $\til{\F}$ is $G$-invariant, this is independent of the choice of lift $\til{U}$.

Now we can apply the extension procedure provided by the lemma. We will let 
$$
\F = \op{Fact}(\F_0).
$$
$\F$ is the desired descended factorization algebra. 

\numberedparagraph
This descent property implies that any translation-invariant factorization algebra $\F$ on $\R$ descends to a factorization algebra $\F^{S^1}$ on $S^1 = \R / \Z$.  We will let
$$
FH ( S^1, \F ) = \F^{S^1} (S^1)
$$
denote the complex of global sections of the factorization algebra $\F^{S^1}$ on $S^1$.  We will refer to the complex $FH ( S^1, \F)$ as the factorization homology complex of $S^1$ with coefficients in $\F$.

\begin{lemma}
Let $\F_A$ denote the factorization algebra on $\R$ associated to a differential graded associated algebra $A$.  Then, there is a natural quasi-isomorphism
$$
FH ( S^1, \F_A  )  \simeq HH(A)
$$
between the factorization homology complex of $S^1$ with coefficients in $\F_A$, and the Hochschild complex of $A$.
\end{lemma}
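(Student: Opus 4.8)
The plan is to unwind the descent construction of the preceding sections and to recognise the resulting \v{C}ech complex as the cyclic bar complex of $A$. By the construction of the descended factorization algebra, $\F_A^{S^1} = \op{Fact}(\F_0)$, where $\mf U$ is the factorizing basis of $S^1 = \R/\Z$ consisting of all proper open arcs and $\F_0(U) = \F_A(\til U) = A$ for a lift $\til U \subset \R$ of $U \in \mf U$, with structure maps induced from the product maps of $\F_A$. Hence
$$
FH(S^1, \F_A) = \op{Fact}(\F_0)(S^1) = \check{C}(\mf U, \F_0) = \bigoplus_{k \ge 0} \bigoplus_{\alpha_1,\dots,\alpha_k \in P\mf U} \F_0(\alpha_1,\dots,\alpha_k)[k-1],
$$
so it remains to compute this complex. (One cannot shortcut this via a finite cover: an elementary argument shows $S^1$ has no finite factorizing cover by proper arcs — for any finite cover one exhibits a finite point set missed by every disjoint subcollection — so the full infinite \v{C}ech complex must be analysed.)

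First I would reorganise the complex according to the combinatorial position of the arcs that occur. A tuple $(\alpha_1,\dots,\alpha_k)$ together with a choice of one arc from each $\alpha_i$ contributes a copy of $A$ only when the chosen arcs intersect; the union of the endpoints of the arcs actually occurring cuts $S^1$ into finitely many subarcs, and the factorization structure maps multiply the relevant copies of $A$ into those subarcs. Reading off this data produces a natural zig-zag of maps relating $\check{C}(\mf U, \F_0)$ to the cyclic bar complex
$$
B_\bullet(A) = \left( \bigoplus_{m \ge 0} A^{\otimes(m+1)}[m], \; b \right),
$$
where the $m+1$ tensor factors sit in the $m+1$ subarcs cut out by $m+1$ cyclically ordered "cut points" and the Hochschild differential $b$ comes from deleting a cut point and multiplying the two neighbouring factors — the wrap-around face (last factor times first) being exactly what distinguishes $B_\bullet(A)$ from the ordinary bar complex. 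This $B_\bullet(A)$ computes $HH(A)$; equivalently $B_\bullet(A) \simeq A \otimes^{\mbb L}_{A \otimes A^{\op}} A = \operatorname{Tor}^{A \otimes A^{\op}}_\bullet(A,A)$, the conceptual point being that cutting $S^1$ at one generic point propagates $A$ around the circle as an $A$-bimodule and reclosing the circle takes its trace.

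To show the comparison is a quasi-isomorphism, I would filter $\check{C}(\mf U, \F_0)$ by the number of distinct cut points. On the associated graded, the data of \emph{which} arcs realise a fixed set of cut points (as opposed to merely the cut points) assembles, for each fixed configuration, into an auxiliary complex which is acyclic — it is a shifted reduced chain complex of the nerve of a contractible poset of arc-collections, of the same nature as the contractibility underlying the Lemma on $\op{Fact}(\F)$ above. Collapsing these contractible factors leaves precisely $B_\bullet(A)$ in its normalized form; comparing the normalized with the unnormalized Hochschild complex of $A$ completes the identification, and naturality in $A$ is manifest throughout.

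The main obstacle is this last step: arranging the filtration so that the relevant subquotients are genuinely acyclic, and — the genuinely delicate part — controlling the infinite direct sums and the completed topological tensor products, so that the contracting homotopies converge, are continuous, and give an honest quasi-isomorphism (equivalently, so that the spectral sequence of the filtration converges). The interaction of the $S_k$-covariance in the definition of a prefactorization algebra with the symmetrizations in the \v{C}ech differential, together with the bookkeeping of signs and of the cyclic (rather than linear) ordering of the cut points, also require care.
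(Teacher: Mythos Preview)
Your plan is sound in outline and not wrong, but it differs from the paper's argument in an important way. The paper does not attempt to manipulate the \v{C}ech complex directly. Instead, it observes that the descent prescription presents $FH(S^1,\F_A)$ as
\[
\hocolim_{I_1,\ldots,I_n} A^{\otimes n},
\]
the homotopy colimit over the diagram of unordered disjoint intervals in $S^1$, with transition maps given by multiplication in $A$; it then cites \cite{Lur09a} for the identification of this homotopy colimit with the cyclic bar complex. In other words, the paper packages your entire filtration-and-collapse argument into a single appeal to an existing result.

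Your route --- filtering the \v{C}ech complex by the combinatorics of ``cut points'' and contracting away the residual choice-of-arc data --- is essentially a by-hand verification of that same homotopy colimit computation in this particular case. It buys independence from \cite{Lur09a} at the cost of exactly the technical issues you flag: convergence of the filtration spectral sequence over an infinite cover, compatibility with completed tensor products, and the sign and symmetry bookkeeping for the cyclic ordering. None of these is insurmountable, but each requires genuine work that you have (honestly) left as an obstacle rather than resolved. So as written your proposal is a correct strategy but not yet a proof; the paper's version is shorter precisely because it outsources this step.
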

\begin{proof}
If one analyzes the descent prescription described above, one sees that 
$$
FH ( S^1, \F_A ) = \hocolim_{I_1, \ldots ,I_n} A^{\otimes n}
$$
where the homotopy colimit is over disjoint unordered intervals in $S^1$.  The maps in this homotopy colimit just arise form multiplication in $A$.    One sees that a complex which looks like the ordinary cyclic bar complex emerges from this procedure.  In  \cite{Lur09a} it is proven that the result of this homotopy colimit is indeed homotopy equivalent to the cyclic bar complex. 
\end{proof}

\numberedparagraph
If $\lambda \in \R_{> 0}$, let $S^1_\lambda$ be the quotient of $\R$ by the lattice $\lambda \Z$.  If $\F$ is a translation-invariant factorization algebra on $\R$, then we can descend $\F$ to a factorization algebra on $S^1_\lambda$, and thus define factorization homology $FH (S^1_\lambda, \F)$. When $\lambda = 1$, this coincides with the definition given above. In principle, there is no reason that $FH(S^1_\lambda, \F)$ should be independent of $\lambda$.

If we use the factorization algebra $\F_A$ arising from an associative algebra $A$, then all the factorization homology complexes $FH (S^1_\lambda ,\F_A)$ are canonically isomorphic.  This is because the factorization algebra $\F_A$ on $\R$ is not only translation invariant but also dilation invariant. 

\numberedparagraph

As I mentioned earlier, the factorization algebras relevant to the Witten genus are translation-invariant factorization algebras on $\C$.  Let $\F$ be such a factorization algebra.  Let $E$ be an elliptic curve equipped with a volume element $\omega$.  We will write $E$ as a quotient $\C / \Lambda$ of $\C$ by a lattice $\Lambda$, in such a way that form $\omega$ on $E$ pulls back to the volume form $\d z$ on $\C$. 

Since $\F$ is translation-invariant, it is in particular invariant under $\Lambda$.  Thus, $\F$ descends to a factorization algebra $\F^E$ on $E$.  We define the factorization homology complex of $E$ with coefficients in $\F$ by
$$
FH ( E , \F) = \F^E ( E).
$$
Thus, $FH ( E, \F)$ is the global sections of $\F^E$ on $E$.

Thus, there is an analog of the Hochschild homology groups for every elliptic curve $E$ with volume element $\omega$.  

\section{Main theorem}
The main theorem states that the Witten class of a complex manifold $X$ arises when one considers the factorization homology of a certain sheaf (on $X$) of translation-invariant factorization algebras on $\C$.   Before I state this theorem, I need to recall the definition of the Witten class.

\numberedparagraph
Let $E$ be an elliptic curve, and let $\omega$ be a translation-invariant volume element on $E$.  The Witten class
$$
\op{Wit}(X,E,\omega) \in \R\Gamma(X, \Omega^{-\ast}(X))   = \oplus H^i( X, \Omega^i(X)) 
$$
is a cohomology class, defined as follows.

Let
$$
E_{2k}(E,\omega) = \sum_{\lambda \in \Lambda} \lambda^{-2k}
$$
be the Eisenstein series of the marked elliptic curve $(E,\omega)$. Here, as before, we are writing $E$ as the quotient of $\C$ by a lattice $\Lambda$, in such a way that $\omega$ corresponds to $\d z$.

The Witten class of $X$ is defined by
$$
\op{Wit}(X, E,\omega) = \exp \left\{\sum_{k \ge 2}  \frac{(2k-1)!}{ (2 \pi i )^{2k} }  E_{2k}(E,\omega) ch_{2k} ( T X) \right\}.
$$

If $\tau$ is in the upper half-plane, let $(E_\tau, \omega_\tau)$ denote the elliptic curve associated to the lattice generated by $(1,\tau)$, with volume form $\omega_\tau$ corresponding to $\d z$.  Then, the Witten class has the property that
$$
\lim_{\tau \to i \infty} \op{Wit}(X,E_\tau, \omega_\tau) = e^{-c_1(T_X) / 2} \op{Td}(T X) .
$$
This follows from the identities
\begin{align*}
\lim_{\tau \to i \infty} E_{2k} ( E_\tau, \omega_\tau ) &= 2 \zeta(2k ) \\
\sum_{k \ge 1} 2 \zeta(2k)  \frac{x^{2k}}{ 2k (2 \pi i)^{2k} }  &= \log \left( \frac{x}{1 - e^{-x} } \right) - \frac{x}{2}
\end{align*}
where $\zeta$ is the Riemann zeta function.

\numberedparagraph
Now we can state the theorem.
\begin{theorem}
Let $X$ be a complex manifold, equipped with a trivialization of the second Chern character $ch_2(T X)$.  Then, there is a sheaf $D^{\hbar}_{X,ch}$ of translation-invariant factorization algebras on $\C$, over the algebra $\C[\hbar]$, such that, for every elliptic curve $E$ with volume element $\omega$, there is a natural isomorphism of BD algebras
$$
FH( E, D^{\hbar}_{X,ch} ) \simeq  \left( \Omega^{-\ast} (T^\ast X)[\hbar], \hbar L_\pi + \hbar \{ \log \op{Wit}(X,E,\omega) , - \} \right).
$$

Alternatively, there is an isomorphism of cochain complexes
$$
FH( E, D^{\hbar}_{X,ch} ) \simeq  \left( \Omega^{-\ast} (T^\ast X)[\hbar], \hbar L_\pi\right).
$$
sending
$$
1 \to \op{Wit}(X,E,\omega).
$$
\end{theorem}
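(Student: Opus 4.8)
The plan is to realize $D^\hbar_{X,ch}$ as the factorization algebra of a two-dimensional quantum field theory and then to compute its factorization homology on an elliptic curve by perturbative methods, with a Riemann surface playing the role that $S^1$ played for Hochschild homology in the Todd case. The relevant classical theory is the cotangent theory to the moduli of holomorphic maps from a Riemann surface $\Sigma$ to $X$: in local coordinates it is the curved $\beta\gamma$ system with target $T^\ast X$, whose fields are a map $\gamma$ into the formal completion of $X$ together with a conjugate field $\beta\in\Omega^{1,\ast}(\Sigma,\gamma^\ast T^\ast X)$, with action $\int\beta\,\bar\partial\gamma$ corrected by the curvature of $X$. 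To globalize over $X$ one works, exactly as in Fedosov's construction of deformation quantizations of $T^\ast X$, with the bundle over $X$ of formal holomorphic coordinate systems in the sense of Gelfand-Kazhdan formal geometry: build the theory on the formal disk, observe that it carries an action of the relevant formal automorphism group, and descend. By the theorem of \cite{CosGwi10} cited above, this classical field theory already produces a translation-invariant \emph{commutative} factorization algebra on $\C$; its factorization homology on an elliptic curve $E$ models functions on the derived space of holomorphic maps $E\to T^\ast X$, and since $\bar\partial$ on $E$ has cohomology only in degrees $0$ and $1$ this space is, near constant maps, the shifted tangent bundle of $T^\ast X$, whose functions are $\Omega^{-\ast}(T^\ast X)$ — the $\hbar=0$ reduction of the claimed answer.

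Next, using the renormalization and BV machinery of \cite{Cos10}, I would construct a BD quantization of this classical theory over $\C[\hbar]$. The curved $\beta\gamma$ system has a one-loop anomaly: the obstruction to quantizing lies in a degree-one cohomology group of local functionals on $X$, and the decisive point is that this obstruction is a nonzero multiple of the second component $ch_2(TX)$ of the Chern character. Hence the hypothesis of the theorem — a trivialization of $ch_2(TX)$ — is exactly what cancels the anomaly; combined with the vanishing of the relevant higher deformation groups it determines the quantization $D^\hbar_{X,ch}$ up to contractible ambiguity (absorbed into the choice of renormalization scheme). Passing the quantum theory through \cite{CosGwi10} produces the desired sheaf of translation-invariant factorization algebras on $\C$ over $\C[\hbar]$, with its natural BD structure — the factorization product, the BV antibracket, and the quantum differential playing the roles of the three pieces of structure in the definition of a BD algebra.

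It remains to compute $FH(E,D^\hbar_{X,ch})$ for $E=\C/\Lambda$ with $\omega\leftrightarrow\d z$. The theory descends to $E$, and since it is a deformation of a free theory its global observables are computed, as in the Todd case, by a sum over Feynman graphs whose edges carry the $\bar\partial$-propagator on $E$ — the $\Lambda$-periodic Green's kernel — and whose vertices carry the curvature of $X$. Only wheels contribute to the quantum differential: the wheel with $2k$ vertices has weight an elliptic integral evaluating to the Eisenstein series $E_{2k}(E,\omega)=\sum_{\lambda\in\Lambda}\lambda^{-2k}$ (odd wheels vanishing by the symmetry $\lambda\mapsto-\lambda$), while the $2k$ curvature insertions assemble, up to the factorial $(2k)!$ and the order-$2k$ cyclic automorphism of the wheel, into $(2k-1)!\,ch_{2k}(TX)$; the $k=1$ wheel, which would have produced the quasi-modular combination $E_2\,ch_2(TX)$, is removed by the trivialization of $ch_2$. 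Exponentiating the connected contributions shows that, transported to $\Omega^{-\ast}(T^\ast X)[\hbar]$ via the classical identification, the quantum differential becomes $\hbar L_\pi+\hbar\{\log\op{Wit}(X,E,\omega),-\}$ — the piece $\hbar L_\pi$ being the BV Laplacian already present for $X$ a point, and $\hbar\{\log\op{Wit}(X,E,\omega),-\}$ the exponentiated wheel correction, with $\log\op{Wit}(X,E,\omega)=\sum_{k\ge2}\tfrac{(2k-1)!}{(2\pi i)^{2k}}E_{2k}(E,\omega)\,ch_{2k}(TX)$ — and the construction is manifestly natural in $(E,\omega)$. The second, "alternatively", formulation then follows formally by conjugating this isomorphism by $\op{Wit}(X,E,\omega)$: exactly as in the relation between the two Todd-case theorems, conjugation kills the term $\hbar\{\log\op{Wit},-\}$ and carries $1$ to $\op{Wit}(X,E,\omega)$.

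I expect the main obstacle to be the combination of the quantization step and the elliptic Feynman computation: one must identify the one-loop anomaly of the curved $\beta\gamma$ system precisely with (a multiple of) $ch_2(TX)$ and check that the given trivialization yields a sufficiently canonical factorization algebra, and one must carry out the wheel integrals on $E$ to show that the $2k$-vertex wheel weights are the Eisenstein series $E_{2k}(E,\omega)$ and that, with the correct combinatorial factors, they assemble into $\log\op{Wit}(X,E,\omega)$. Everything else — the descent and globalization, the classical identification of the global sections with $\Omega^{-\ast}(T^\ast X)$, and the passage between the two formulations — is formal given the results recalled above.
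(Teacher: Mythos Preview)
Your proposal is essentially the approach the paper itself describes. Note, however, that the present paper does not actually \emph{prove} this theorem: after restating it more precisely in \S8.4, the paper says only that ``This theorem is proved using the Wilsonian approach to quantum field theory developed in \cite{Cos10}. The result is then translated into the language of factorization algebras. The proof appears in \cite{Cos10b}.'' What the paper does provide is the surrounding architecture --- the identification of the relevant classical theory as one-dimensional holomorphic Chern-Simons with target $(T^\ast X)_{\dbar}$ (your curved $\beta\gamma$ system), the statement that the obstruction to quantization is $ch_2(TX)$, and the remark that factorization homology on $E$ is computed by descending the quantum theory to $E$ --- and all of this matches your outline. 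Your sketch of the Feynman-diagram computation (wheels on $E$ producing Eisenstein series, the $k=1$ wheel being killed by the trivialization of $ch_2$, and Gelfand--Kazhdan descent for globalization over $X$) is the content of the companion paper \cite{Cos10b} rather than of this one, but it is the intended argument.
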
 

\numberedparagraph

The factorization algebra I construct is  an analytic avatar of the chiral differential operators constructed by Gorbounov, Mailkov and Schechtman \cite{GorMalSch00}.  Note that in their work, the $q$-expansion of the Witten genus appears as the character of the algebra of chiral differential operators. The way the Witten genus appears in this paper is somewhat different, and has the advantage that we see the Witten genus directly as a function on the moduli space of elliptic curves, and not just as a $q$-expansion.   A further analysis of the relationship between the $q$-expansion of the Witten genus and the chiral differential operators has been undertaken by Cheung \cite{Che08}.

\section{Factorization algebras from quantum field theory}
A factorization algebra is the algebraic structure satisfied by the observables of a quantum field theory.  In \cite{CosGwi10} we prove a theorem allowing one to construct factorization algebras using the techniques of perturbative renormalization.  The factorization algebra $D^\hbar_{X,ch}$ encoding the Witten genus will be constructed by quantizing a certain two-dimensional quantum field theory, called holomorphic Chern-Simons theory.
 
Before I discuss this particular quantum field theory,  let me explain, heuristically, why one would expect the observables of a quantum field theory to form a factorization algebra.   Suppose we have a quantum field theory (whatever that is) on a manifold $M$.  Then, for every open subset $U \subset M$, we would expect the set of observables on $U$ -- that is, the set of measurements that can be made by an observer in the open subset $U$ -- to form a vector space, which we call $\mc F(U)$.

If $U \subset V$, then an observable on $U$ will, in particular, be an observable on $V$, so that we get a map $\mc F (U) \to \mc F(V)$.

If $U_1$ and $U_2$ are disjoint, we would expect that all obervables on $U_1 \amalg U_2$ are obtained by taking the product of an observable on $U_1$ with one on $U_2$.  Thus, we would expect that 
$$\mc F (U_1 \amalg U_2) = \mc F (U_1) \otimes \mc F(U_2).$$
Together, these maps give $\mc{F}$ the structure of a factorization algebra.

\numberedparagraph
The idea that the observables of a quantum field theory form a factorization algebra is compatible with two familiar examples.

Quantum mechanics is a quantum field theory on the real line $\R$.   The obserables for quantum mechanics form an associative algebra.  Associative algebras are a particular class of factorization algebras on $\R$. In \cite{CosGwi10}, we show that the factorization algebra associated to the free field theory on $\R$ is, in fact, an $E_1$ algebra; specifically, it is the familiar Weyl algebra of observables of quantum mechanics. 

A second well-understood example is conformal field theory.  The observables of conformal field theory on $\C$ form a vertex algebra; and, as we have seen, vertex algebras are a special class of factorization algebra $\C$.  

\numberedparagraph
Let me now briefly state the results of \cite{CosGwi10} and \cite{Cos10}, allowing one to construct factorization algebras.

In \cite{Cos10}, I gave a definition of a quantum field theory on a manifold $M$, using a synthesis between Wilson's concept of a low-energy effective field theory and the Batalin-Vilkovisky formalism for quantizing gauge theories.   Further, I developed techniques (based on the machinery of perturbative renormalization) allowing one to construct such quantum field theories from Lagrangians.    

Many quantum field theories of physical and mathematical interest, such as Chern-Simons theory and Yang-Mills theory, can be put in this framework.   

The most succinct way to state the main construction of \cite{CosGwi10} is as follows.
\begin{theorem}
Any quantum field theory in the sense of \cite{Cos10}, on a manifold $M$, yields a factorization algebra on $M$.
\end{theorem}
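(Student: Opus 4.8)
The plan is to realize the sought-after factorization algebra as the \emph{algebra of quantum observables} $\Obs^q$ of the theory, following the strategy of \cite{CosGwi10}. Recall that a quantum field theory on $M$ in the sense of \cite{Cos10} is, by definition, a family $\{ I[L] \}$ of effective interactions --- one for each length scale, or more invariantly each parametrix --- valued in functionals on the space of fields $\mscr{E}$, satisfying the renormalization group flow equation and the scale-$L$ quantum master equation. First, for each open $U \subset M$, I would define a cochain complex $\Obs^q(U)$ whose elements are families $\{ O[L] \}$ of functionals on the fields, each $O[L]$ depending (in the limiting sense appropriate to positive-scale regularization) only on the restriction of fields to $U$, and related to one another by the RG flow. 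The differential on $\Obs^q(U)$ at scale $L$ is $Q + \hbar \Delta_L + \{ I[L], - \}_L$, where $\Delta_L$ is the scale-$L$ BV Laplacian and $\{-,-\}_L$ the associated bracket; the RG flow compatibility guarantees that these differentials agree across scales and that changing the scale induces a quasi-isomorphism, so that $\Obs^q(U)$ is well defined (up to the relevant topological completions). The normalization $\Obs^q(\emptyset) = \C$ records the fact that the only observable depending on no field is the constant, together with $D 1 = 0$.

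Second, I would construct the prefactorization structure. The inclusion maps $\Obs^q(U) \to \Obs^q(V)$ for $U \subset V$ are tautological. For disjoint opens $U_1, \ldots, U_k$ contained in $V$, the factorization product $\Obs^q(U_1) \ot \cdots \ot \Obs^q(U_k) \to \Obs^q(V)$ is defined by multiplying effective observables: given $O_i \in \Obs^q(U_i)$, one forms the product $O_1[L] \cdots O_k[L]$ and takes $L \to 0$. The key point is that, because the supports of the $O_i$ become disjoint in this limit, the cross-terms of $\Delta_L$ and of $\{-,-\}_L$ that would link different factors vanish, so the product is a cochain map intertwining the scale-$L$ differentials. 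Checking that these maps are $S_k$-equivariant, associative, and compatible with composition (the commuting diagram in the definition of a prefactorization algebra) is then a formal consequence of the commutativity and associativity of the product on functionals together with this vanishing of cross-terms; one also records translation- and diffeomorphism-naturality of the whole construction, which is exactly what supplies the equivariant and translation-invariant refinements needed for the descent constructions of the previous section.

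Third --- and this is the main obstacle --- one must verify the locality axiom: for every open $U$ and every factorizing cover $\mf{U}$, the natural map $\check{C}(\mf{U}, \Obs^q) \to \Obs^q(U)$ is a quasi-isomorphism. The strategy is a Mayer--Vietoris / homotopy-colimit argument carried out scale by scale. Using a partition of unity subordinate to $\mf{U}$, one decomposes an effective observable supported in $U$ into a sum of pieces supported in elements of the cover, modulo an exact term; iterating and passing to the homotopy colimit over disjoint collections drawn from $\mf{U}$ should reconstruct all of $\Obs^q(U)$. The analytic subtlety is that positive-scale observables are smeared, so ``support in $U_i$'' is only approximate at scale $L > 0$; one must show the resulting error is controlled uniformly in the relevant seminorms and vanishes as $L \to 0$, so that the acyclicity of the Čech differential --- whose purely combinatorial part reduces to the contractibility of a full simplex, as in the local-to-global argument underlying the $\op{Fact}$ construction above --- is not obstructed. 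Granting this, $\Obs^q$ is a prefactorization algebra satisfying locality, hence a factorization algebra on $M$; and since every step above is functorial in the family $\{ I[L] \}$, the construction is natural in the quantum field theory.
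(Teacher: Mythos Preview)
The paper does not prove this theorem: it is stated as the main construction of \cite{CosGwi10} and simply cited, with no argument given here. Your proposal is therefore not competing with a proof in the paper but rather sketching what \cite{CosGwi10} does, and at that level of description your outline is broadly on target --- observables as RG-compatible families of functionals, the BV differential $Q + \hbar\Delta_L + \{I[L],-\}_L$, products coming from disjoint support, and locality as the hard step.

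Two points in your sketch would need correction if you pushed it further. First, the factorization product is not obtained by forming $O_1[L]\cdots O_k[L]$ and sending $L\to 0$; one instead multiplies using a parametrix whose propagator has support small relative to the separation of the $U_i$ (so the cross-terms in $\Delta_L$ and $\{-,-\}_L$ vanish on the nose, not just in a limit), and then RG-flows the result to all other parametrices. Second, the locality proof in \cite{CosGwi10} is not a direct partition-of-unity estimate on quantum observables with smearing errors controlled as $L\to 0$; rather one filters $\Obs^q$ by powers of $\hbar$, so that the associated graded is the classical observables $\Obs^{cl}$, and then proves the \v{C}ech quasi-isomorphism there, where it reduces to the cosheaf property of compactly supported sections. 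A spectral sequence (or inductive) argument then lifts locality to $\Obs^q$. Your analytic heuristic is morally right but is not how the argument is actually organized.
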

We have seen that factorization algebras satisfy a descent property: if a discrete group $G$ acts properly discontinuously on a manifold $M$, then a $G$-equivariant factorization algebra $\til{\F}$ on $M$ descends to the quotient $M / G$.     Quantum field theories in the sense of \cite{Cos10} satisfy a similar descent property, and the construction of a factorization algebra from a quantum field theory is compatible with descent.  

\section{Deformation quantization in quantum field theory}
In this section, I will explain a little about how one associates a factorization algebra to a classical or quantum field theory.   We will see that the procedure of quantizing a classical field theory can be interpreted in algebraic terms as a kind of deformation quantization in the world of factorization algebras.  

The observables of a classical mechanical system form a commutative algebra, whereas the observables of a quantum mechanical system are only an associative algebra.  We should view this commutativity as being an extra structure present on the observables of a classical system. 

There is a similar story in field theory: the observables of a classical field theory on a manifold $M$ have an extra structure, that of a \emph{commutative} factorization algebra. 

Because we will use this concept several times, it is worth introducing the most general notion. 
\begin{definition}
A \emph{Hopf operad} is an operad in the category of differential graded cocommutative algebras. 
\end{definition}
If $P$ is a Hopf operad, then for each $n$, $P(n)$ is a cocommutative co-dga, and the operad maps are compatible with the coproduct.

If $V,W$ are $P$-algebras, then $V \otimes W$ is also: the coproduct on the spaces $P(n)$ allows one to define the action of $P$ on $V \otimes W$.

For example, the commutative operad $\op{Com}$, defined by $\op{Com}(n) = \C$ for each $n > 0$, is a Hopf operad. The tensor product on commutative algebras defined by this Hopf operad structure is the usual tensor product. 
\begin{definition} Let $P$ be a Hopf operad. Then a $P$ factorization algebra is a factorization algebra $\F$ such that each $\F(U)$ is equipped with the structure of $P$-algebra, and the structure  maps
$$
\F(U_1) \otimes \cdots \F(U_n) \to \F(V)
$$
are maps of $P$-algebras. 
\end{definition}

\numberedparagraph

The main object of interest in a classical field theory is the space of solutions to the Euler-Lagrange equation.  If $U \subset M$ is an open set, let $\EL(U)$ be this space.   Sending $U \mapsto \EL(U)$ defines a sheaf of formal spaces on $M$.  

This sheaf of solutions to the Euler-Lagrange equations can be encoded in the structure of a commutative factorization algebra.    If $U \subset M$ is an open subset, we will let $\Oo(\mc{EL}(U))$ denote the space of functions on $\mc{EL}(U)$. 

Sending $U \mapsto \Oo(\mc{EL}(U))$ defines a commutative factorization algebra: if $U_1,\ldots, U_n \subset U_{n+1}$ are disjoint open subsets, there is a restriction map
$$
\mc{EL}(U_{n+1}) \to \mc{EL}  (U_1) \times \cdots \times \mc{EL} ( U_n) . 
$$
Replacing the map of spaces by the corresponding map of algebras of functions yields the desired structure of commutative factorization algebra.  

\numberedparagraph
In the familiar deformation quantization story, the algebra of observables of a classical mechanical system is a commutative algebra endowed with an extra structure, namely a Poisson bracket. This extra structure is what tells us that the commutative algebra ``wants'' to deform into an associative algebra.

There is a similar picture in the world of factorization algebras: the commutative factorization algebra associated to a classical field theory is endowed with an extra structure, which makes it ``want'' to deform into a plain factorization algebra.

Ordinary Poisson algebras interpolate between commutative algebras and associative (or $E_1$) algebras.  For us, the object describing the observables of a quantum field theory is not an $E_1$ algebra in a symmetric monoidal category; instead, it is an $E_0$ algebra.  An $E_0$ algebra in vector spaces is simply a vector space with an element.   An $E_0$ algebra in any symmetric monoidal category is an object of this category with a map from the unit object.    An $E_0$ algebra in the symmetric monoidal category of factorization algebras is simply a factorization algebra, as every factorization algebra is equipped with a unit. 
 
Thus, the analog of the Poisson operad  we are searching for is an operad that interpolates between the commutative operad and the $E_0$ operad.  Such an operad was constructed by Beilinson and Drinfeld \cite{BeiDri04}; we will call it the BD operad\footnote{Beilinson and Drinfeld called this operad the Batalin-Vilkovisky operad.  However, in the literature, the Batalin-Vilkovisky operad has, unfortunately, come to refer to a different object.}.  In the introduction I described what an algebra over the BD operad is.  Here is a description of the operad itself.
 \begin{definition}
Let $P_0$ be the graded operad over $\C$  generated by a commutative and associatve product, $\ast$, and a Poisson bracket $\{-,-\}$ of cohomological degree $+1$.

Let $BD$ denote the differential graded  operad over the ring $\C[\hbar]$ which, as a graded operad, is simply $P_0 \otimes \C[\hbar]$, but which is equipped with differential 
$$
\d \ast = \hbar \{-,-\}.
$$
\end{definition}
If we specialize to $\hbar =0$, we find the $BD$ operad becomes the operad $P_0$. If we specialize to $\hbar = 1$, however, the $BD$ operad becomes the $E_0$ operad.   Thus, we find that the operad $P_0$ bears the same relationship to the operad $E_0$ as the usual Poisson operad bears to the associative operad $E_1$. 

The operads $P_0$ and $BD$ are both Hopf operads. (Recall that this means that the tensor product of two $P_0$ (or $BD$) algebras has a natural structure of $P_0$ (or $BD$) algebra). 

Thus, we can talk about $P_0$ factorization algebras and $BD$ factorization algebras.  A $P_0$ factorization algebra $\F_{cl}$ is a factorization algebra such that, for each $U$ in $M$, $\F^{cl}(U)$ is a $P_0$ algebra, and all the structure maps are maps of $P_0$ algebras; and similar for $BD$ factorization algebras.
\begin{definition}
 A  \emph{quantization} of a $P_0$ factorization algebra $\F_{cl}$ is a BD factorization algebra factorization algebra $\F_q$,
$$
\F_q \otimes_{\C[\hbar]} \C \iso \F_{cl}
$$
of Poisson factorization algebras. 
\end{definition}
Thus, $\F_q(U)$ is a quantization of the $P_0$ algebra $\F_{cl}(U)$, for each $U \subset M$. 
\numberedparagraph
Now we can restate the main results of \cite{CosGwi10}.
\begin{theorem}
Every classical field theory on $M$ gives rise to a Poisson factorization algebra on $M$.   A quantization of this classical field theory (in the sense of \cite{Cos10}) gives rise to a quantization of this Poisson factorization algebra. 
\end{theorem}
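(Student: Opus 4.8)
The plan is to unwind the definitions of quantum field theory from \cite{Cos10} and to exhibit, for each open $U \subseteq M$, the algebra of observables supported in $U$ together with the factorization structure maps; one then identifies the resulting algebraic structure with that of a $P_0$ (classically) or $BD$ (quantum mechanically) factorization algebra.

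First I would recall that a classical field theory on $M$ in the Batalin--Vilkovisky formalism is a sheaf $\E$ of fields --- the sections of a $\Z$-graded vector bundle --- with a local degree $-1$ symplectic pairing and a local action functional $S$ whose quadratic part defines an elliptic complex $(\E,Q)$ and which satisfies the classical master equation $\{S,S\}=0$. For an open set $U$ I would define the classical observables $\Obs^{cl}(U)$ to be the functions $\Oo(\EL(U))$ on the formal derived space of solutions to the Euler--Lagrange equations; concretely this is the completed symmetric algebra $\what{\Sym}(\E(U)^\dual)$ equipped with the differential $Q + \{S_I,-\}$, where $S_I$ is the interacting part of $S$. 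The commutative product is pointwise multiplication and the factorization structure maps are dual to the restriction maps $\EL(V) \to \EL(U_1) \times \cdots \times \EL(U_n)$ for disjoint $U_1,\dots,U_n \subseteq V$. The degree $+1$ Poisson bracket is induced by the symplectic pairing; here one must check it is well defined despite the distributional nature of $\E(U)^\dual$, which I would handle by working with an appropriate functional-analytic completion. Finally one proves the locality (\v{C}ech descent) axiom: filtering by symmetric powers reduces this to a statement about the underlying free theory, which becomes a K\"unneth / elliptic-regularity theorem for the elliptic complex $(\E,Q)$.

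Next I would treat the quantum side. A quantization in the sense of \cite{Cos10} is a family $\{I[L]\}_{L>0}$ of effective interactions, one per length scale, related by the renormalization group flow and each satisfying the scale-$L$ quantum master equation. For each $U$ and each $L$ I would form the complex $\Obs^q_L(U) = (\what{\Sym}(\E(U)^\dual)[\hbar],\ Q + \{I[L],-\}_L + \hbar\Delta_L)$, where $\Delta_L$ is the scale-$L$ BV Laplacian and $\{-,-\}_L$ its associated bracket; the quantum master equation is exactly what makes this square to zero and makes it a $BD$ algebra. The renormalization group flow furnishes $BD$-algebra isomorphisms $\Obs^q_L(U) \simeq \Obs^q_{L'}(U)$, so $\Obs^q(U)$ is well defined independently of $L$ (equivalently it can be defined invariantly as the space of RG-compatible families of functionals with prescribed support). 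Setting $\hbar = 0$ kills the $\hbar\Delta_L$ term and sends $I[L]$ to $S_I$, recovering $\Obs^{cl}(U)$ with its $P_0$ structure, so $\Obs^q$ is a quantization of the classical Poisson factorization algebra in the required sense.

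The main obstacle is the construction of the factorization structure maps on $\Obs^q$. The difficulty is that, for disjoint $U_1,\dots,U_n$ inside $V$, the BV Laplacian $\Delta_L$ on $V$ does not split as the sum of the $\Delta_L$ on the pieces: there are cross terms built from the propagator connecting different $U_i$. The resolution is to exploit that the scale-$L$ propagator concentrates near the diagonal as $L \to 0$, so that for observables supported in disjoint opens and $L$ smaller than the pairwise distances the cross terms vanish and the scale-$L$ product is a strict cochain map; independence of this auxiliary scale then follows from the RG flow, and the $S_n$-equivariance and the associativity compatibilities are then routine. The locality axiom for $\Obs^q$ is verified as in the classical case, by a spectral sequence reducing to the free theory. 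The genuinely hard inputs, established in \cite{CosGwi10}, are (i) the analytic estimates on propagators and effective interactions that make all the completed tensor products and support arguments legitimate, and (ii) the \v{C}ech descent statement for the free field theory.
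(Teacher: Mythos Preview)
The paper does not prove this theorem: it is stated as a summary of the main construction of \cite{CosGwi10} and is treated as a black box. There is therefore no proof in the paper to compare your proposal against.

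That said, your sketch is a faithful outline of the strategy carried out in \cite{CosGwi10}, and the paper's surrounding discussion (the description of $\Oo(\EL(U))$ as a commutative factorization algebra, the appearance of the degree $+1$ bracket from the degree $-1$ symplectic form, and the role of the BD operad) matches the narrative you give. A few points where your sketch elides genuine work, which you partly flag yourself: the Poisson bracket on classical observables is not literally defined on all of $\what{\Sym}(\E(U)^\dual)$ because of the distributional pairing, and in \cite{CosGwi10} one either passes to a quasi-isomorphic subcomplex of smooth functionals or interprets the $P_0$ structure only up to homotopy; and the factorization product on quantum observables is not built by literally taking $L$ smaller than the distance between the $U_i$ (the heat-kernel regularized propagator never has strictly compact support), but rather by working with parametrices with controlled support and showing independence of that choice via RG flow. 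Modulo these refinements, your outline is correct and more detailed than anything the present paper supplies.
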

What I mean by a classical field theory on $M$ is detailed in \cite{Cos10}, but it is something rather familiar.  There is a space of fields, which is taken to be the space $\E$ of sections of some vector bundle $E$ on $M$, or more generally some space of maps $M \to N$ to some other manifold $N$.  In addition, there is an action functional $S : \E \to \R$ (or to $\C$), which is taken to be the integral of some Lagrangian density.    When dealing with theories with gauge symmetry, this basic picture needs to be modified by the introduction of fields which possess a cohomological degree.  This more sophisticated picture is known as the Batalin-Vilkovisky formalism.

In \cite{Cos10, CosGwi10} we always work in the Batalin-Vilkovisky formalism. Thus, our space of classical fields is equipped with a symplectic form of cohomological degree $-1$.  The fact that the factorization algebra of observables of a classical field theory is equipped with a Poisson bracket of degree $+1$ is simply a version of the familiar statement that the algebra of functions on a symplectic manifold has a natural Poisson bracket.

\section{Holomorphic Chern-Simons theory}
As we have seen,   when we work in the BV formalism, the space of classical fields is a (typically infinite dimensional) differential graded manifold equipped with a symplectic form of cohomological degree $-1$.  The action functional is a secondary object in this approach.  The differential on the space of fields preserves the symplectic form, and thus, at least locally, is given by Poisson bracket with some Hamiltonian function $S$, of cohomological degree zero. This function is the classical action. 

In the paper \cite{AleKonSch95}, Alexandrov, Kontsevich, Schwartz and Zabronovsky introduced a beautiful and general method for constructing classical field theories in the BV formalism.  Many quantum field theories studied in mathematics arise from the AKSZ construction.  For example, Chern-Simons theory,  Rozansky-Witten theory and the Poisson $\sigma$ model all fit very naturally into this framework.

For us, the relevance of the AKSZ construction is that the classical field theory related to the Witten genus arises most naturally from the AKSZ construction. 

Before I introduce the AKSZ construction, we need some notation.
\begin{definition}
A \emph{differential graded manifold} is a smooth manifold $X$ equipped with a sheaf $\Oo_X$ of differential graded commutative algebras over $\C$, with the property that $\Oo_X$ is locally isomorphic as a graded algebra to $\cinfty_X [[x_1,\ldots, x_n]]$, where $x_i$ are formal variables of cohomological degree $d_i \in \Z$. 
\end{definition}
In this definition, $\cinfty_X$ refers to the sheaf of complex-valued smooth functions on $X$.  One can talk about geometric structures -- such as Poisson or symplectic structures -- on a differential graded manifolds.   

If $X$ is a smooth manifold, we will let $X_{dR}$ denote the dg manifold whose underlying smooth manifold is $X$, and whose sheaf of functions is the complexified de Rham complex $\Omega^\ast_X$ of $X$. If $X$ is a complex manifold, we will let $X_{\dbar}$ denote the dg manifold whose underlying smooth manifold is $X$, and whose sheaf of functions is the Dolbeault complex $\Omega^{0,\ast}_X$. 

\numberedparagraph
Now we can explain the AKSZ construction.  Suppose we have a compact differential graded manifold $M$, equipped with volume element of cohomological degree $k$. Let $X$ be a differential graded manifold with a symplectic form of cohomological degree $l$.  Then, the infinite-dimensional differential graded manifold $\op{Maps}(M,X)$ acquires a symplectic form of cohomological degree $l - k$.

If $f : M \to X$ is a map, then the tangent space to $\op{Maps}(M,X)$ at $f$ is
$$
T_f \op{Maps}(M,X) = \Gamma(M, f^\ast T X).
$$
We define a pairing on $T_f \op{Maps}(M,X)$ by the formula
$$
\ip{\alpha,\beta} = \int_M \ip{\alpha,\beta}_X.
$$
Since the integration map $\int_M : \cinfty(M) \to \R$ is of cohomological degree $-k$, and the symplectic pairing on $T X$ is of cohomological degree $m$, the pairing on $T_f \op{Maps}(M,X)$ is of cohomological degree $m - k$.  The case of interest in the Batalin-Vilkovisky formalism is when $m-k = -1$.  

There is a variation of this construction which applies when the source manifold $M$ is non-compact.  In this situation, the space $\op{Maps}(M,X)$ has a natural integrable distribution given by the subspace
$$
\Gamma_c(M, f^\ast T X) \subset  T_f^c \op{Maps}(M,X) \subset T_f \op{Maps}(M,X)
$$
consisting of compactly supported tangent vector fields.    In this situation,  instead of having a symplectic pairing on $T_f \op{Maps}(M,X)$, we only have one on the distribution $T_f^c\op{Maps}(M,X)$.    The action functional, instead of being a closed one-form on $\op{Maps}(M,X)$, is a closed one-form on the leaves of the foliation.

\numberedparagraph

There are two broad classes of AKSZ theories which are commonly considered.  These are the theories of Chern-Simons type, and the theories of holomorphic Chern-Simons type. 

The two classes of theories are distinguished by the nature of the source dg manifold.  In theories of Chern-Simons type, the source differential graded ringed space is $M_{dR}$, where $M$ is an oriented manifold.   The orientation on $M$ gives rise to a volume element on $M_{dR}$ of cohomological degree $\op{dim}(M)$. 

The target manifolds for Chern-Simons theories of dimension $k$ are dg symplectic manifolds of dimension $k-1$.   For example, perturbative Chern-Simons theory arises when we take the target to be the dg manifold whose underlying manifold is a point, and whose algebra of functions is the algebra $C^\ast(\mf g)$ of cochains on a semi-simple Lie algebra $\mf g$.  The Killing form endows this dg manifold with a symplectic form of cohomological degree $2$.   This theory is perturbative, because maps $M_{dR} \to C^\ast(\g)$ are the same as connections on the trivial principal $G$ bundle which are infinitesimally close to the trivial connection.

Non-perturbative Chern-Simons theory arises from a genearlized form of the AKSZ construction whcih takes the stack $BG$ as the target manifold.  Vector bundles on $B G$ are the same as $G$-modules; the tangent bundle of $B G$ is the adjoint module $\g[1]$.   The Killing form on $\g$ is $G$-equivariant, and so gives rise to a symplectic form on $B G$ of cohomological degree $2$.

Rozansky-Witten theory also arises from this framework.  Let $X_{\dbar}$ be a holomorphic symplectic manifold.  Let us work over the base ring $\C[q,q^{-1}]$ where $q$ is a parameter of degree $-2$.  Then the symplectic form $q^{-1} \omega$ on $X_{\dbar}$ is of cohomological degree $2$, and so we can define a $3$-dimensional Chern-Simons type theory.    The fields of this theory are maps $M_{dR} \to X_{\dbar}$, where $M$ is a $3$-manifold, and everything takes place over the base ring $\C[q,q^{-1}]$. 

Another example is the Poisson $\sigma$-model of \cite{Kon97, CatFel01}.  Here, the source is $\Sigma_{dR}$ where $\Sigma$ is a smooth surface. The target is the differential graded manifold $T^\ast[1] X$, whose underlying smooth manifold is $X$, and whose algebra of functions is $\Gamma(X, \wedge^\ast T X)$.    The Schouten-Nijenhuis bracket $\{-,-\}$ endows this dg manifold with a symplectic form of cohomological degree $1$.  The differential on $\Gamma(X,\wedge^\ast TX)$ is given by bracketing with the Poisson tensor $\pi$.

\numberedparagraph
Let us now discuss holomorphic Chern-Simons theory, which is the only quantum field theory we will be concerned with in this paper.     In holomorphic Chern-Simons theory, the source dg manifold is $M_{\dbar}$, where $M$ is a complex manifold equipped with a never-vanishing holomorphic volume element $\omega$ (thus, $M$ is a Calabi-Yau manifold).  This volume form can be thought of as a volume element on $M_{\dbar}$ of cohomological degree $\op{dim}_{\C}(M)$.    Integration against this volume element is simply the map
\begin{align*}
\Omega^{0,\op{dim}_{\C}(M)} (M) &\to \C \\
\alpha &\mapsto \int_M \omega \wedge \alpha.
\end{align*}

Theories of holomorphic Chern-Simons type on Calabi-Yau manifolds of complex dimension $k$ can thus be constructed from dg symplectic manifolds with a symplectic form of cohomological degree $k-1$.  

In this paper, we are only interested in one-dimensional holomorphic Chern-Simons theories.  In these theories, the source dg manifold is $\Sigma_{\dbar}$, where $\Sigma$ is a Riemann surface equipped with a never-vanishing holomorphic volume form. The target is $X_{\dbar}$, where $X$ is a holomorphic symplectic manifold.  (The holomorphic symplectic form can be thought of as a dg symplectic form on $X_{\dbar}$).  

\numberedparagraph
We can now give a more precise statement of the theorem relating elliptic cohomology and the Witten genus.

\begin{theorem}
Let $X$ be a complex manifold.  Then,
\begin{enumerate}
\item The obstruction to quantizing the holomorphic Chern-Simons theory whose fields are maps $\C_{\dbar} \to (T^\ast X)_{\dbar}$ is 
$$\op{ch}_2 ( T X ) \in H^2(X, \Omega^2_{cl}(X))$$
where $\Omega^2_{cl}(X)$ is the sheaf of closed holomorphic $2$-forms on $X$.
\item If this obstruction vanishes (or, more precisely, is trivialized), then we can quantize holomorphic Chern-Simons theory to yield a factorization algebra on $\C$ with values in quasi-coherent sheaves on $X_{dR}\times \op{Spec} \C[\hbar]$.  We will call this factorization $D^{\hbar}_{X,ch}$.
\item If $E$ is an elliptic curve, then there is a quasi-isomorphism of $BD$ algebras in quasi-coherent sheaves on $X_{dR} \times \C[\hbar]$ 
$$
FH (  E,  D^{\hbar}_{X,ch } ) \simeq \left(  \Omega^{-\ast} ( T^\ast X ) [\hbar] , \hbar L_\pi + \hbar \{ \log \op{Wit}(X,E) - \} \right) .
$$
\end{enumerate}
\end{theorem}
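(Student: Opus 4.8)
The plan is to follow the three parts of the statement in order: build the classical AKSZ theory, quantize it and read off the one‑loop anomaly, and then compute the factorization homology over $E$ by an explicit Feynman‑graph calculation on the elliptic curve.

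\emph{The classical theory and the anomaly.} First I would spell out the classical holomorphic Chern--Simons theory precisely: the fields are maps $\C_{\dbar}\to(T^\ast X)_{\dbar}$, which in a formal neighbourhood of a point of $X$ is a curved $\beta\gamma$-system, a pair $(\gamma,\beta)$ with $\gamma\in\Omega^{0,\ast}(\C)\otimes\what{\Oo}_{X}$ and $\beta$ its conjugate twisted by $\d z$; the holomorphic symplectic pairing on $T^\ast X$ (of cohomological degree $0=\dim_{\C}\C-1$) together with $\int_{\C}\d z$ gives the degree $-1$ BV pairing on the mapping space. The theory over all of $X$ is assembled from this formal model by Gelfand--Kazhdan (formal geometry) descent along the bundle of formal coordinate systems of $X$; this is why every object produced below lives over $X_{dR}$. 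To quantize I would run the renormalization construction of \cite{Cos10}: fix the gauge attached to the flat metric on $\C$ given by $\d z$, build the scale‑$L$ effective interactions by summing graphs weighted by the regularized $\dbar$-propagator on $\C$, and seek a solution of the quantum master equation. The only obstruction is at one loop; it is a degree‑one class in the complex of local functionals of the background map, and since that complex is computed (by a Gelfand--Fuks‑type argument) in terms of characteristic forms, the obstruction is a characteristic class, which the single relevant wheel diagram (with two vertices) identifies with $\op{ch}_2(TX)\in H^2(X,\Omega^2_{cl}(X))$ --- the holomorphic refinement reflecting the $(0,\ast)$-nature of the fields. This is part (1). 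Given a trivialization of $\op{ch}_2(TX)$ the obstruction vanishes, the space of quantizations is non‑empty, and choosing one compatibly with translation invariance and the Gelfand--Kazhdan structure, the theorem of \cite{CosGwi10} produces the translation‑invariant $BD$ factorization algebra $D^{\hbar}_{X,ch}$ on $\C$ with values in quasi‑coherent sheaves on $X_{dR}\times\Spec\C[\hbar]$; this is part (2).

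\emph{The shape of the factorization homology.} Writing $E=\C/\Lambda$ with $\omega\leftrightarrow\d z$, descent gives $D^{\hbar}_{X,ch}$ on $E$ and hence $FH(E,D^{\hbar}_{X,ch})$. At $\hbar=0$ this is the derived algebra of functions on the space of solutions to the Euler--Lagrange equations, $\op{Maps}(E_{\dbar},(T^\ast X)_{\dbar})$; since $\Omega^{0,\ast}(E)$ has cohomology $\C\oplus\C[-1]$, its algebra of functions is $\Omega^{-\ast}(T^\ast X)$ with the wedge product and the canonical bracket $\{-,-\}_\pi$, exactly the $\hbar=0$ reduction of the right‑hand side. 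Consequently $FH(E,D^{\hbar}_{X,ch})$ admits a model with underlying graded object $\Omega^{-\ast}(T^\ast X)[\hbar]$, the wedge product and the bracket $\{-,-\}_\pi$; its differential vanishes mod $\hbar$ by the classical computation and, after a further change of model, is $\hbar D_1$ with $D_1$ a square‑zero BV operator generating $\{-,-\}_\pi$. Since $L_\pi$ is one such operator, $D_1=L_\pi+v$ with $v$ a cohomological‑degree‑one derivation, i.e.\ a vector field on $T^\ast X$; it remains to show $v=\{\alpha,-\}_\pi$ for a specific class $\alpha\in\oplus_{i\ge1}H^i(X,\Omega^i X)$ of total degree $0$, equivalently --- passing to the conjugated presentation, in which $1\mapsto e^{\alpha}$ --- to compute the image of the vacuum. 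By the rigidity Lemma of \S2 this class is unique once one knows it has no zero‑form component, so a computation of it suffices.

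\emph{The Eisenstein computation.} The image $e^{\alpha}$ of the vacuum is computed by the Feynman graphs of $FH(E,D^{\hbar}_{X,ch})$. Because the kinetic operator is that of a free $\beta\gamma$ system, the corrections of interest are carried by one‑loop \emph{wheels} (built from the curvature vertices) --- the same mechanism that produces the $\hat A$-genus from one‑dimensional Chern--Simons theory; a wheel with an odd number of vertices vanishes by the symmetry $\lambda\mapsto-\lambda$ of $E$, so only wheels with $2k$ vertices contribute, each carrying a factor $\op{ch}_{2k}(TX)$ from the $2k$ formal‑geometry vertices. The analytic weight of the $2k$-wheel is an integral over $E^{2k}$ of a cyclic product of the $\dbar$-Green's function of $(E,\d z)$; expanding this Green's function in its Fourier--lattice series and integrating produces the Eisenstein sum $\sum_{\lambda\in\Lambda}\lambda^{-2k}=E_{2k}(E,\omega)$, and bookkeeping of the combinatorial and normalization factors yields the coefficient $\tfrac{(2k-1)!}{(2\pi i)^{2k}}$. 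The $k=1$ wheel is precisely the one‑loop anomaly, which has been trivialized and hence drops out, so exponentiating the connected wheels gives
$$
e^{\alpha}=\exp\left(\sum_{k\ge2}\tfrac{(2k-1)!}{(2\pi i)^{2k}}\,E_{2k}(E,\omega)\,\op{ch}_{2k}(TX)\right)=\op{Wit}(X,E,\omega).
$$
The differential on $\Omega^{-\ast}(T^\ast X)[\hbar]$ is therefore $\hbar L_\pi+\hbar\{\log\op{Wit}(X,E,\omega),-\}_\pi$, which is part (3). The main obstacle is exactly this last step: choosing the correct gauge and $\dbar$-propagator on $E$, proving that only wheels contribute, and extracting their weights as Eisenstein series with the precise normalization; this is the analytic heart of the argument, and the point at which modularity, invisible in the $q$-expansion, is produced by the geometry. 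A subsidiary difficulty running through all three parts is making the Gelfand--Kazhdan descent rigorous enough that the factorization algebra, its quantization and the final class depend only on $(X,\text{ the trivialization of }\op{ch}_2,\,E,\,\omega)$ and not on the auxiliary gauge‑fixing or renormalization choices.
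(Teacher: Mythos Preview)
The paper does not actually contain a proof of this theorem: it is an announcement, and the paragraph immediately following the statement says only that the result ``is proved using the Wilsonian approach to quantum field theory developed in \cite{Cos10}'' and that ``the proof appears in \cite{Cos10b}''. So there is no proof here to compare your proposal against. What the paper does record is that (i) by descent, $FH(E,D^{\hbar}_{X,ch})$ is the complex of global observables of holomorphic Chern--Simons theory on $E$ with target $T^\ast X$, and (ii) the Wilsonian/BV machinery of \cite{Cos10} is the tool used to analyze it.

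Your outline is fully consistent with that description, and is in fact a fair summary of how the argument in the sequel \cite{Cos10b} runs: set up the theory via AKSZ plus Gelfand--Kazhdan formal geometry, read off the one-loop anomaly as $\op{ch}_2(TX)$ from the two-vertex wheel, then on $E$ compute the image of the vacuum by summing one-loop wheels whose analytic weights are Eisenstein series. You have also correctly isolated the two places where real work is hidden --- the propagator/wheel computation on $E$ with its normalization, and making Gelfand--Kazhdan descent compatible with the renormalization and factorization-algebra constructions. As a proof \emph{sketch} I see no wrong turn; just be aware that each of your three paragraphs compresses a substantial chapter of the sequel, and that the passage from ``the quantum differential is $\hbar L_\pi$ plus a degree-one derivation'' to ``that derivation is Hamiltonian for a class with no zero-form part'' needs its own argument before the rigidity lemma can be invoked.
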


\numberedparagraph
Recall that the factorization homology complex $FH ( E, D^{\hbar}_{X,ch} )$ is defined  by first constructing a factorization algebra $D^{E,\hbar}_{X,ch}$ on $E$, using the descent property of factorization algebras; and then taking global sections.

Quantum field theories in the sense of \cite{Cos10} have a descent property similar to that satisfied by factorization algebras, and the construction of a factorization algebra from a quantum field theory is compatible with descent.    The quantum field theory on an elliptic curve $E$ which arises by descent from holomorphic Chern-Simons theory on $\C$ is simply holomorphic Chern-Simons theory on $E$.

Thus, one can interpret the factorization homology group $FH ( E, D^{\hbar}_{X,ch} )$ in terms of holomorphic Chern-Simons theory on the elliptic curve $E$.   From this point of view, $FH( E, D^{\hbar} _{X,ch} ) $ is the cochain complex of global observables for the holomorphic Chern-Simons theory of maps $E \to T^\ast X$.  

This theorem is proved using the Wilsonian approach to quantum field theory developed in \cite{Cos10}.  The result is then translated into the language of factorization algebras.    The proof appears in \cite{Cos10b}.

\numberedparagraph
As I mentioned earlier, this factorization algebra is an analytic avatar of the chiral algebra of chiral differential operators constructed by Gorbounov, Malikov and Schechtman \cite{GorMalSch00}.  A detailed description of the factorization algebra constructed from holomorphic Chern-Simons theory, and its relationship to chiral differential operators, will appear elsewhere.    

In the physics literature, Kapustin\cite{Kap05} and Witten \cite{Wit05} have argued that chiral differential operators arise from what is called, in section 3.3 of \cite{Wit05}, a non-linear $\beta$-$\gamma$ system.     This non-linear $\beta$-$\gamma$ system is the same as what I have called holomorphic Chern-Simons theory.

\begin{comment}

\numberedparagraph

One can also ask how this construction relates to the original suggestion of Witten \cite{Wit87, Wit88}. Witten argued that the Witten genus arises as the partition function of a certain supersymmetric $\sigma$-model.   In this last section I will explain briefly how the  the $N = 1$ supersymmetric $\sigma$ model, when rewritten in th

Let us first consider the ordinary $\sigma$-model of maps $f : \Sigma \to X$, where $X$ is a Kahler manifold.  The action of this theory is 
$$
S(f) = \int_{\Sigma}\ip{\d f, \d f} .
$$
One can argue \cite{} that the infinite-volume limit of this theory is decouples as a tensor product of the holomorphic and anti-holomorphic Chern-Simons theory.  It seems that introducing the 

\end{comment}

%\bibliographystyle{hep}
%\bibliography{bibliograph}

\begin{thebibliography}{AKSZ97}

\bibitem[AKSZ97]{AleKonSch95}
M.~Alexandrov, M.~Kontsevich, A.~Schwarz and O.~Zabronovsky, \textsl{ The
  Geometry of the master equation and topological field theory},
\newblock Internat. J. Modern Phys. \textbf{ 12}(7), 1405--1429 (1997),
  {hep-th/9502010}.

\bibitem[BD04]{BeiDri04}
A.~Beilinson and V.~Drinfeld,
\newblock \textsl{ Chiral algebras}, volume~51 of \textsl{ American
  Mathematical Society Colloquium Publications},
\newblock American Mathematical Society, Providence, RI, 2004.

\bibitem[BNT02]{BreNesTsy99}
P.~Bressler, R.~Nest and B.~Tsyagn, \textsl{ Riemann-{R}och theorems via
  deformation quantization, I},
\newblock Adv. Math. \textbf{ 167}(1), 1--25 (2002), {math.AG/9904121}.

\bibitem[CF01]{CatFel01}
A.~Cattaneo and G.~Felder, \textsl{ Poisson sigma-models and deformation
  quantization},
\newblock (2001), {hep-th/0102208}.

\bibitem[Che08]{Che08}
P.~Cheung, \textsl{ The {W}itten genus and vertex algebras}, \newblock(2008), {arXiv:0811.1418}.

\bibitem[CG10]{CosGwi10}
K.~Costello and O.~Gwilliam, \textsl{ Factorization algebras in perturbative
  quantum field theory}, in progress
\newblock (2010).

\bibitem[Cos10b]{Cos10}
K.~Costello, \textsl{ Renormalization and effective field theory},
\newblock (2010). { \\Available at \verb=http://www.math.northwestern.edu/~costello/=}.

\bibitem[Cos10a]{Cos10b}
K.~Costello, \textsl{ A geometric construction of the {W}itten genus, II},
\newblock (2010). { \\Available at \verb=http://www.math.northwestern.edu/~costello/=}.


\bibitem[Fed96]{Fed96}
B.~Fedosov,
\newblock \textsl{ Deformation quantization and index theory},
\newblock Akademie Verlag, 1996.

\bibitem[GMS00]{GorMalSch00}
V.~Gorbounov, F.~Malikov and V.~Schechtman, \textsl{ Gerbes of chiral
  differential operators},
\newblock Math. Res. Lett. \textbf{ 7}(1), 55--66 (2000).

\bibitem[Kap05]{Kap05}
A.~Kapustin, \textsl{ Chiral de Rham complex and the half-twisted sigma-model},
\newblock (2005), {hep-th/0504074}.

\bibitem[Kon03]{Kon97}
M.~Kontsevich, \textsl{ Deformation quantization of {P}oisson manifolds},
\newblock Lett. Math. Phys. \textbf{ 66}(3), 157--216 (2003), {q-alg/0709040}.

\bibitem[Lur09]{Lur09a}
J.~Lurie, \textsl{ Derived algebraic geometry {VI}: $E_k$ algebras.},
\newblock (2009), {\\ \verb=http://math.mit.edu/~lurie/papers/DAG-VI.pdf=}.

\bibitem[Wit05]{Wit05}
E.~Witten, \textsl{ Two-dimensional models with (0,2) supersymmetry:
  perturbative aspects},
\newblock (2005), {hep-th/0504078}.

\end{thebibliography}

\def\cprime{$'$}

\end{document}